\newtheorem{thm}{Theorem}
\newtheorem{define}{Definition}[section]
\newtheorem{lemma}[define]{Lemma}
\newtheorem{prop}[define]{Proposition}
\newtheorem{thmcoro}[thm]{Corollary}
\title{Hamilton cycles, minimum degree and bipartite holes}
\begin{document}

\author{
  Colin McDiarmid\\
  \texttt{cmcd@stats.ox.ac.uk}
  \and
  Nikola Yolov\\
  \texttt{nikola.yolov@cs.ox.ac.uk}
}
\maketitle

\begin{abstract}
We present a tight extremal threshold for the existence of Hamilton cycles
in graphs with large minimum degree and without a large ``bipartite hole``
(two disjoint sets of vertices with no edges between them).
This result extends Dirac's classical theorem,
and is related to a theorem of Chv\'atal and Erd\H{o}s.

In detail, an \emph{$(s, t)$-bipartite-hole} in a graph $G$
consists of two disjoint sets of vertices $S$ and $T$ with $|S|= s$ and $|T|=t$
such that there are no edges between $S$ and $T$;
and $\widetilde{\alpha}(G)$ is the maximum integer $r$
such that $G$ contains an $(s, t)$-bipartite-hole
for every pair of non-negative integers $s$ and $t$ with $s + t = r$.
Our central theorem is that a graph $G$ with at least $3$ vertices is Hamiltonian
if its minimum degree is at least $\widetilde{\alpha}(G)$.

From the proof we obtain a polynomial time algorithm that either finds a Hamilton cycle
or a large bipartite hole.
The theorem also yields a condition for the existence of $k$ edge-disjoint Hamilton cycles.
We see that for dense random graphs $G(n,p)$,
the probability of failing to contain many edge-disjoint Hamilton cycles
is $(1 - p)^{(1 + o(1))n}$.
Finally, we discuss the complexity of calculating and approximating
$\widetilde{\alpha}(G)$.
\end{abstract}


\section{Introduction and statement of results}

Hamilton cycles are one of the central topics in graph theory,
see for example~\cite{bondy_murty}.
The problem of recognising the existence of a Hamilton cycle in a graph
is included in Karp's 21 NP-complete problems \cite{karp_21_problems}.
Recall that $\delta(G)$ denotes the minimum degree $d(v)$ of a vertex $v$ in $G$.
An early result by Dirac \cite{dirac} states:

\begin{thm}[Dirac's Theorem]
  \label{thm:Dirac}
  A graph $G$ with $n \ge 3$ vertices is Hamiltonian
  if $\delta(G) \geq n/2$.
\end{thm}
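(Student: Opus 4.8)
The plan is to use the classical longest-path (rotation) argument. Write $n = |V(G)|$ and assume $\delta(G) \ge n/2$. First I would record that $G$ is connected: a component on $c$ vertices has every vertex of degree at most $c-1$, so the hypothesis $\delta(G) \ge n/2$ forces $c > n/2$, and two such components cannot coexist in a graph on $n$ vertices. This connectivity is only needed at the very end.

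Next, take a longest path $P = v_0 v_1 \cdots v_k$ in $G$. By maximality, every neighbour of the endpoint $v_0$, and every neighbour of $v_k$, must already lie on $P$, since otherwise $P$ could be extended. The heart of the argument is to produce from $P$ a cycle $C$ through all of $V(P)$. To this end I would consider the two index sets $A = \{ i : v_0 v_i \in E(G) \}$ and $B = \{ i : v_k v_{i-1} \in E(G) \}$, both contained in $\{1, \dots, k\}$. For any $i \in A \cap B$ we have $v_0 v_i \in E(G)$ and $v_{i-1} v_k \in E(G)$, which lets us reroute $P$ into the cycle $v_0 v_1 \cdots v_{i-1} v_k v_{k-1} \cdots v_i v_0$ spanning $V(P)$. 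Since $|A| = d(v_0) \ge n/2$ and $|B| = d(v_k) \ge n/2$ while $|A \cup B| \le k \le n-1$, the pigeonhole count $|A| + |B| \ge n > n-1 \ge k \ge |A \cup B|$ forces $A \cap B \neq \emptyset$, so such a cycle $C$ exists.

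Finally I would argue that $C$ is spanning. If some vertex $w$ lay outside $V(C) = V(P)$, connectivity would supply an edge from $w$ to a vertex $v_j$ of $C$; deleting a cycle-edge at $v_j$ turns $C$ into a spanning path of $V(P)$ ending at $v_j$, and appending $w$ yields a path on $k+2$ vertices, contradicting the maximality of $P$. Hence $V(C) = V(G)$ and $C$ is a Hamilton cycle. I expect the main obstacle to be arranging the index sets $A$ and $B$ so that a shared index genuinely encodes a valid rerouting of $P$ into a cycle (taking care of the boundary case $i=1$); the degree hypothesis then enters precisely through the bound $|A| + |B| \ge n$, which is exactly why the threshold $n/2$ is the natural one.
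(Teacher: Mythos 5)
Your proof is correct, and it is the standard self-contained argument for Dirac's theorem: connectivity from the degree bound, a longest path $v_0\cdots v_k$, the two index sets $A=\{i: v_0v_i\in E\}$ and $B=\{i: v_kv_{i-1}\in E\}$ inside $\{1,\dots,k\}$, the inclusion--exclusion count $|A\cap B|\ge |A|+|B|-|A\cup B|\ge n-(n-1)>0$, a rotation at a common index to close the path into a cycle on $V(P)$, and connectivity to show the cycle is spanning. (The boundary case $i=1$ that you flag is harmless: it just means $v_0v_k\in E$ and the cycle is $P$ itself closed up.) The paper, however, does not prove Dirac's theorem this way --- it does not reprove it at all, but instead observes that it is a special case of its main Theorem~\ref{thm:extremal}: if $\delta(G)\ge n/2$ then $G$ has no $(1,\lfloor n/2\rfloor)$-bipartite-hole, so $\widetilde{\alpha}(G)\le n/2\le\delta(G)$ and the main theorem applies. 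That derivation buys generality (Dirac as a one-line corollary of a strictly stronger statement), whereas yours is elementary and independent of the machinery of bipartite holes. The two are nonetheless close at the technical core: your single rotation at an index $i\in A\cap B$ is exactly situation (a) in the paper's proof of Theorem~\ref{thm:extremal}, which the authors themselves describe as yielding ``a standard proof of Dirac's and Ore's theorems''; the paper's extra work (cases (b) and (c)) is what is needed only for the weaker hypothesis $\delta(G)\ge\widetilde{\alpha}(G)$.
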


The theorem is sharp,
since the disjoint union of two complete $n$-vertex graphs
has minimum degree $n-1$ and it does not contain a Hamilton cycle.
This example contains a large bipartite hole,
that is two disjoint sets of vertices with no edge between them.
It is natural to ask if such a hole is necessary
to construct a non-Hamiltonian graph with large minimum degree.
We show that indeed this is the case.

Given disjoint sets $S$ and $T$ of vertices in a graph,
we let $E(S,T)$ denote the set of edges with one end in $S$ and one in $T$.

\begin{define}
  An \emph{$(s, t)$-bipartite-hole in a graph $G$}
  consists of two disjoint sets of vertices $S$ and $T$ with
  $|S|= s$ and $|T|=t$ such that  $E(S, T) = \emptyset$.
  We define the \emph{bipartite-hole-number}  $\widetilde{\alpha}(G)$ to be
  the least integer $r$ which may be written as $r=s+t-1$ for some positive
  integers $s$ and $t$ such that $G$ does not contain an $(s, t)$-bipartite-hole.
\end{define}
\noindent

An equivalent definition of $\widetilde{\alpha}(G)$ is the maximum integer $r$
such that $G$ contains an $(s, t)$-bipartite-hole
for every pair of non-negative integers $s$ and $t$ with $s + t = r$.
Observe that $\widetilde{\alpha}(G) = 1$ if and only if $G$ is complete,
and $\widetilde{\alpha}(G) \ge \alpha(G)$,
where $\alpha(G)$ is the stability number of $G$.
Also note that for $1 \leq a \leq b$, we have
$\widetilde{\alpha}(K_{a,b}) = b$ and
$\widetilde{\alpha}(\overline{K_{a,b}}) = \min\{b+1, 2a+1\}$.
(Here $K_{a,b}$ denotes the complete bipartite graph with parts of sizes $a$ and $b$,
and $\overline{G}$ denotes the complement of $G$.)

The following is our main theorem.
It arose from our investigations of the random perfect graph $P_n$,
where we wished to show that $P_n$ is Hamiltonian with failure probability
$e^{-\Omega(n)}$, see~\cite{rpg}.

\begin{restatable}{thm}{extremallemma}
  \label{thm:extremal}
  A graph $G$ with at least 3 vertices is Hamiltonian if
  $\delta(G) \ge \widetilde{\alpha}(G)$.
\end{restatable}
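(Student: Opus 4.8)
The plan is to argue by contradiction: I assume that $G$ has at least $3$ vertices, that $\delta(G) \ge \widetilde{\alpha}(G)$, and that $G$ is \emph{not} Hamiltonian, and I aim to produce a forbidden bipartite hole. Write $r = \widetilde{\alpha}(G)$ and $d = \delta(G) \ge r$, and let $n = |V(G)|$. The crucial move is to exploit the hypothesis in the form supplied by the definition rather than its ``for every split'' form: since $r = \widetilde{\alpha}(G)$, there are positive integers $s_0, t_0$ with $s_0 + t_0 = r + 1 \le d + 1$ for which $G$ contains \emph{no} $(s_0, t_0)$-bipartite-hole. This single missing hole is precisely the obstruction I will contradict, so I only need to build one bipartite hole, of this prescribed shape.

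First I would dispose of the easy reductions. If $G$ is disconnected, its smallest component $C$ satisfies $\delta(G) \le |C| - 1$, so $s_0 \le s_0 + t_0 = r+1 \le d+1 \le |C|$, and placing $S$ of size $s_0$ inside $C$ and $T$ of size $t_0$ in any other component (whose size is at least $|C|$) produces an $(s_0, t_0)$-hole, a contradiction. Hence I may assume $G$ is connected, and I take a longest path $P = v_1 v_2 \cdots v_p$. By maximality $N(v_1), N(v_p) \subseteq V(P)$, and either $p = n$ (a Hamilton path but no Hamilton cycle) or $p < n$.

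The engine of the proof is the P\'osa rotation technique applied at \emph{both} ends of $P$. Rotating the left end while fixing $v_p$ generates a set $X$ of vertices that can serve as the left endpoint of a longest path on the vertex set $V(P)$; symmetrically I rotate the right end to generate a companion set $Y$ of right endpoints. The two facts I need are: (i) a \emph{non-adjacency} statement, $E(X, Y) = \emptyset$ — for if some $x \in X$ and $y \in Y$ were adjacent and were simultaneously the two endpoints of one longest path, the edge $xy$ would close a cycle through all of $V(P)$, which when $p = n$ is a Hamilton cycle, and when $p < n$ can be broken and reattached using an edge from $V(P)$ to the outside (guaranteed by connectivity) to yield a path longer than $P$; either way a contradiction; and (ii) a \emph{size} statement, that the minimum-degree bound forces $|X| \ge s_0$ and $|Y| \ge t_0$. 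Granting (i) and (ii), disjoint subsets of $X$ and $Y$ of sizes $s_0$ and $t_0$ form an $(s_0, t_0)$-bipartite-hole, contradicting the choice of $s_0, t_0$, and the theorem follows.

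The hard part will be making (i) and (ii) precise and simultaneously true. The delicate issue in (i) is that an arbitrary $x \in X$ and $y \in Y$ need not be endpoints of a \emph{common} longest path, so the naive cycle-closing argument does not apply directly; resolving this requires the P\'osa crossing lemma to show that the rotation-endpoint structure is stable enough that $E(X,Y) = \emptyset$ holds for the full sets, and one must handle the overlap $X \cap Y$ and the ``shadow'' (successor) vertices that the rotations actually forbid. The delicate issue in (ii) — and the genuinely new feature compared with Dirac's theorem and Chv\'atal--Erd\H{o}s — is that I must control \emph{two} parameters at once: the budget $\delta(G) \ge s_0 + t_0 - 1$ has to be spent so as to guarantee at least $s_0$ vertices on one side and at least $t_0$ on the other, rather than merely bounding a single independence or endpoint count. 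Matching these counts exactly to the split $(s_0, t_0)$ that witnesses $\widetilde{\alpha}(G)$, and checking the boundary cases $p < n$ and very short $P$, is where the tightness of the threshold is won or lost.
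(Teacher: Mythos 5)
There is a genuine gap: your write-up is a strategy outline in which the two claims carrying all of the mathematical content, (i) and (ii), are announced but never established --- and you say so yourself (``the hard part will be making (i) and (ii) precise and simultaneously true''). Claim (i) is the more serious problem. For P\'osa rotation sets obtained by rotating at both ends, the non-adjacency you actually get is pairwise: a specific $x$ and a specific $y$ are non-adjacent only when they arise as the two endpoints of a \emph{common} longest path, and the set $Y_x$ of right endpoints reachable after fixing the left endpoint at $x$ genuinely depends on $x$. You acknowledge this, but then assert that a ``P\'osa crossing lemma'' will upgrade it to $E(X,Y)=\emptyset$ for the full sets; no such upgrade is standard, and you give no argument for it. Claim (ii) is equally unresolved: the standard rotation lemma controls $|N(X)|$ in terms of $|X|$ for a single set, and you offer no mechanism for splitting the degree budget $\delta(G)\ge s_0+t_0-1$ asymmetrically so as to force $|X|\ge s_0$ and $|Y|\ge t_0$ for the particular pair $(s_0,t_0)$ that witnesses $\widetilde{\alpha}(G)$. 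As it stands, nothing after the (correct) reduction to a connected graph with a longest path is proved.

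For comparison, the paper avoids rotations entirely. It fixes one longest path $P$ with endpoints $1$ and $n$, isolates three ``flip'' configurations that would close $P$ into a spanning cycle of $V(P)$, and observes that if none occurs then for every cut point $k$ the sets $(N(1)\cap(1,k])^-$ and $(N(n)\cap[k,n))^+$ span no edges, and likewise $\{1\}\cup(N(1)\cap[k,n))^+$ and $(N(n)\cap[1,k))^+$ span no edges. The asymmetric size control that you identified as the crux is then achieved by choosing $k$ so that $|N(1)\cap(1,k]|$ equals $s$ exactly; the two emptiness conditions, combined with the absence of an $(s,t)$-hole, force $d(1)\le s+t-2\le\delta(G)-1$, a contradiction. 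So the bipartite hole is built from successor/predecessor shadows of the two endpoint neighbourhoods along one fixed path, not from rotation endpoint sets, and this is precisely what makes both the non-adjacency and the size bookkeeping go through. Your plan points in a reasonable direction but would need a substantially different (and currently missing) argument to close claims (i) and (ii).
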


This result is sharp in the sense that for every positive integer $r$
there is a non-Hamiltonian graph with $\delta(G) = r = \widetilde{\alpha}(G) -1$.
An example is $G = K_{r, r+1}$, where $\delta(G)=r$ and $\widetilde{\alpha}(G)=r+1$.
Theorem \ref{thm:extremal} generalises Theorem~\ref{thm:Dirac} of Dirac.
Indeed, a graph $G$ with $\delta(G) \ge n/2$ has no
$(1, \lfloor n/2\rfloor)$-bipartite-hole,
and hence $\delta(G) \ge n/2 \ge \widetilde{\alpha}(G)$.
Also,
Theorem \ref{thm:extremal} can be extended to provide a sufficient
condition for the existence of many edge-disjoint Hamilton cycles;
and in fact the next result will be deduced quickly from Theorem~\ref{thm:extremal}.

\begin{restatable}{thm}{disjointcycles}
  \label{thm:disjointcycles}
  Let $r \ge 0$ be an integer,
  and let $G$ be a graph with at least 3 vertices such that
  $\delta(G) \ge (r + 1)\widetilde{\alpha}(G) + 3r$.
  Then $G$ contains $r+1$ edge-disjoint Hamilton cycles.
\end{restatable}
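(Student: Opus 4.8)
The plan is to build the $r+1$ cycles greedily, peeling off one Hamilton cycle at a time and invoking Theorem~\ref{thm:extremal} at each stage. Formally I would induct on $r$: the case $r=0$ is exactly Theorem~\ref{thm:extremal}, since the hypothesis reads $\delta(G)\ge(0+1)\widetilde{\alpha}(G)+0=\widetilde{\alpha}(G)$. For the inductive step, suppose I have already extracted $k$ edge-disjoint Hamilton cycles $H_1,\dots,H_k$ and set $G_k=G-E(H_1)-\cdots-E(H_k)$, with $G_0=G$; I want a Hamilton cycle in $G_k$, which by Theorem~\ref{thm:extremal} only requires $\delta(G_k)\ge\widetilde{\alpha}(G_k)$. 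Two quantities must be tracked along the way: the minimum degree, which is easy, and the bipartite-hole-number, which is the delicate part.

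The degree bookkeeping is immediate: the removed edges form a $2k$-regular spanning subgraph $F_k$, so every degree drops by exactly $2k$ and $\delta(G_k)=\delta(G)-2k$. The heart of the argument is therefore a lemma controlling how much deleting a bounded-degree subgraph can raise $\widetilde{\alpha}$, and the bound I would aim for is $\widetilde{\alpha}(G_k)\le(k+1)\widetilde{\alpha}(G)+k$. Granting this, the condition $\delta(G_k)\ge\widetilde{\alpha}(G_k)$ needed to extract the $(k+1)$-st cycle follows from $\delta(G)-2k\ge(k+1)\widetilde{\alpha}(G)+k$, that is, from $\delta(G)\ge(k+1)\widetilde{\alpha}(G)+3k$; and since the right-hand side is increasing in $k$, the worst case $k=r$ is precisely the stated hypothesis $\delta(G)\ge(r+1)\widetilde{\alpha}(G)+3r$. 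Thus the whole theorem reduces to this single growth estimate.

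To prove the growth estimate I would argue by contraposition, exploiting the ``all splits'' description of $\widetilde{\alpha}$ together with the monotonicity that any $(a,b)$-bipartite-hole contains an $(a',b')$-bipartite-hole whenever $a'\le a$ and $b'\le b$. Writing $\widetilde{\alpha}=\widetilde{\alpha}(G)$ and assuming $\widetilde{\alpha}(G_k)\ge(k+1)(\widetilde{\alpha}+1)$, the graph $G_k$ contains a bipartite-hole for every split of $(k+1)(\widetilde{\alpha}+1)$. Given a target split $(p,q)$ with $p+q=\widetilde{\alpha}+1$, I would start from a $G_k$-hole $(S,T)$ of the blown-up shape $\bigl((k+1)p,(k+1)q\bigr)$: since $G_k=G-E(F_k)$, every $G$-edge between $S$ and $T$ lies in $F_k$, and so these edges form a bipartite graph of maximum degree at most $2k$. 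The goal is then to select $X\subseteq S$ and $Y\subseteq T$ with $|X|=p$, $|Y|=q$ and no $G$-edge between them, producing a $(p,q)$-bipartite-hole of $G$ for \emph{every} split of $\widetilde{\alpha}+1$ and contradicting $\widetilde{\alpha}(G)=\widetilde{\alpha}$.

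The main obstacle is exactly this selection step. One cannot simply pull a large balanced independent set out of the crossing graph: a bounded-degree bipartite graph — for instance an even cycle with its two colour classes as the sides — can fail to contain any independent set meeting both sides in the prescribed numbers, so thinning a single hole in one shot does not work. The remedy is to use the cycle structure of $F_k$, which decomposes into $k$ edge-disjoint $2$-regular pieces, each a disjoint union of paths (and possibly a cycle) inside $S\cup T$, and to perform the selection split-by-split, choosing for each $(p,q)$ an interval-type sub-hole running along these pieces rather than one global independent set. Checking that the $(k+1)$-fold blow-up always leaves enough room for this selection is the one genuinely non-routine calculation; the remaining steps are the bookkeeping above together with the direct appeal to Theorem~\ref{thm:extremal}.
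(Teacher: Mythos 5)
Your reduction of the theorem to a growth estimate for $\widetilde{\alpha}$ is exactly the paper's reduction: the bound you aim for, $\widetilde{\alpha}(G_k)\le(k+1)\widetilde{\alpha}(G)+k$, is precisely Lemma~\ref{thm:disjoint_preliminary} (stated there as $\widetilde{\alpha}(G-H)+1\le(r+1)(\widetilde{\alpha}(G)+1)$), and your degree bookkeeping and the worst-case-at-$k=r$ arithmetic are correct. The gap is that you do not prove the growth estimate, and the route you sketch for it runs into an obstruction that you yourself identify but do not overcome. Blowing up \emph{both} sides by a factor $k+1$ and then thinning a $((k+1)p,(k+1)q)$-hole of $G_k$ down to a $(p,q)$-hole of $G$ can genuinely fail: as you note, if the crossing edges form a long alternating cycle (maximum degree $2$, so consistent even with $k=1$), then every $p$-subset of $S$ has at least $p+1$ neighbours in $T$, leaving fewer than $q$ non-neighbours, and no $(p,q)$-sub-hole exists. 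Your proposed repair --- choosing ``interval-type'' sub-holes split-by-split along the path and cycle pieces of $F_k$ --- is exactly the ``one genuinely non-routine calculation'' that you defer, so the key lemma is not established and the proof is incomplete.

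The fix requires a different, and in fact much easier, idea: exploit the asymmetry in the definition of $\widetilde{\alpha}$. To bound $\widetilde{\alpha}(G-H)$ from above you only need to exhibit \emph{one} pair $(s',t')$ with $s'+t'\le(k+1)(\widetilde{\alpha}(G)+1)$ such that $G-H$ has no $(s',t')$-bipartite-hole, and you are free to choose its shape. Take the witnessing pair $(s,t)$ for $G$ with $s\le t$ (so $s\le\frac{\widetilde{\alpha}(G)+1}{2}$) and inflate only the large side: for disjoint sets $U,W$ with $|U|=s$ and $|W|=2ks+t$, the neighbourhood of $U$ in the union $H$ of the $k$ removed Hamilton cycles has at most $2ks$ vertices, so $W\setminus\Gamma_H(U)$ still contains $t$ vertices; since $G$ has no $(s,t)$-hole there is a $G$-edge from $U$ into $W\setminus\Gamma_H(U)$, and that edge survives in $G-H$. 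Hence $G-H$ has no $(s,2ks+t)$-hole and $\widetilde{\alpha}(G-H)+1\le s+2ks+t\le(k+1)(\widetilde{\alpha}(G)+1)$. This forward counting argument replaces the thinning step entirely; without it, or a completed version of your interval selection, the inductive step cannot be carried out.
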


\noindent
Note that by setting $r=0$ in Theorem \ref{thm:disjointcycles}
we regain Theorem \ref{thm:extremal}.

It is perhaps not surprising that determining $\widetilde{\alpha}(G)$
is NP-hard and that it is hard to approximate, see Section~\ref{sec.hard} below.
However, Theorem \ref{thm:extremal} can be made algorithmic.

\begin{restatable}{thm}{algorithm}
  \label{thm:algorithm}
  There is an algorithm which, on input a graph $G$ with $n \ge 3$ vertices,
  in $O(n^3)$ time outputs either a Hamilton cycle or
  a certificate that $\widetilde{\alpha}(G) > \delta(G)$.
\end{restatable}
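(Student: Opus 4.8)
The plan is to make the proof of Theorem~\ref{thm:extremal} constructive.
The extremal theorem is presumably proved by the standard rotation--extension
(Pósa) argument: one starts with any path, repeatedly lengthens it, and when no
further extension or rotation is possible one argues, using the hypothesis
$\delta(G) \ge \widetilde\alpha(G)$, that a Hamilton cycle must already be
present. The key observation is that every step in such an argument is itself
algorithmic. Taking a maximal path, attempting an extension at an endpoint,
performing a rotation to obtain a new endpoint, and closing a cycle when
adjacent endpoints are found are all operations on the current path and the
adjacency structure that can be carried out by direct inspection. So I would
first isolate from the proof of Theorem~\ref{thm:extremal} the precise
combinatorial situation that is reached when the process halts without a
Hamilton cycle, and record that in that situation one can explicitly exhibit two
disjoint vertex sets $S$ and $T$ with $|S|+|T|-1 > \delta(G)$ and
$E(S,T)=\emptyset$. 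Such a pair is exactly a certificate that
$\widetilde\alpha(G) > \delta(G)$.

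Concretely, I would run the rotation--extension search maintaining a longest
known path together with the set of endpoints reachable by rotations. At each
stage either the path extends, a new endpoint is created by a rotation, a cycle
closes and is merged with an off-path vertex to give a longer path, or none of
these is possible. In the last case the set $R$ of rotation-reachable endpoints,
and the set of their path-neighbours, furnish the two sides of a bipartite hole:
no endpoint in $R$ can be adjacent to a vertex outside the path (else we could
extend), and the structure of rotations forces the absence of edges between $R$
and a second set built from the neighbours of the endpoints along the path. This
is the same counting that underlies the proof of
Theorem~\ref{thm:extremal}; I would simply read off the witnessing sets rather
than deriving a contradiction. The algorithm therefore terminates either with a
Hamilton cycle or with an explicit $(s,t)$-bipartite-hole having $s+t-1>\delta(G)$,
which is the required certificate.

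For the running time, I would bound the work as follows. Each successful
extension strictly increases the path length, so there are at most $n$
extensions overall. Between extensions the rotation phase explores endpoints,
and a careful implementation -- for instance maintaining the path in an array
with the adjacency structure stored as bit-vectors or boolean matrices, so that
scanning a vertex's neighbourhood and testing membership on the path both cost
$O(n)$ -- performs $O(n)$ rotations, each processed in $O(n)$ time, before either
progress is made or the halting configuration is detected. This gives $O(n^2)$
per extension phase and $O(n^3)$ in total, matching the claimed bound. The final
extraction of the bipartite hole from the terminal configuration is a single
$O(n^2)$ scan and does not affect the order of the running time.

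The main obstacle will be the second paragraph: establishing that the sets read
off at termination genuinely have no edges between them and genuinely satisfy
$|S|+|T|-1 > \delta(G)$. This is precisely where the content of
Theorem~\ref{thm:extremal} lives, and the care needed is in verifying that the
rotation-reachable endpoint set is closed in the right sense -- that every vertex
adjacent to a reachable endpoint is itself, after the appropriate rotation,
either an endpoint or has a predecessor that becomes reachable -- so that the
non-adjacencies forced at each individual endpoint combine into a single large
bipartite hole rather than many small ones. Once the proof of
Theorem~\ref{thm:extremal} is phrased so that it exhibits these sets explicitly,
the algorithmic statement follows with only the bookkeeping described above.
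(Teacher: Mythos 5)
There is a genuine gap, and it starts with what a ``certificate that $\widetilde{\alpha}(G) > \delta(G)$'' has to be. By the definition of $\widetilde{\alpha}$, the inequality $\widetilde{\alpha}(G) \ge \delta(G)+1$ asserts that $G$ contains an $(s,t)$-bipartite-hole for \emph{every} pair of positive integers with $s+t = \delta(G)+1$; so a certificate must exhibit an $(i,\delta(G)+1-i)$-hole for each $i=1,\ldots,\lfloor (\delta(G)+1)/2\rfloor$ (this is exactly how the paper defines a certificate just before the proof). Your plan terminates with ``an explicit $(s,t)$-bipartite-hole having $s+t-1>\delta(G)$'' and declares this to be the required certificate; it is not. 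A single hole of shape $(a,b)$ only yields the splits with $i\le a$ and $\delta(G)+1-i\le b$ (or symmetrically), and in particular to cover both the split $i=1$ and the balanced split one would need $\max(a,b)\ge\delta(G)$ \emph{and} $\min(a,b)\ge\lfloor(\delta(G)+1)/2\rfloor$, which nothing in your construction provides. Relatedly, the quantitative link between your witnessing sets and $\delta(G)$ is never established: the P\'osa rotation-reachable endpoint set $R$ certifies non-expansion, $|N(R)|<2|R|$, and the natural hole it gives is between $R$ and $V(G)\setminus(R\cup N(R))$, whose sizes are controlled by $|R|$ and $n$, not by $\delta(G)$; there is no argument that its total size exceeds $\delta(G)+1$, let alone that it covers all splits.

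The paper avoids this by not using rotation--extension at all. Its proof of Theorem~\ref{thm:extremal} is a different argument: three explicit path-closing configurations (a single flip and two double flips), parametrised by a cut point $k$ chosen so that $|N(1)\cap(1,k]|=s$ for a \emph{given} split $(s,t)$; when no configuration applies, equations~(\ref{eq:no_edges_1}) and~(\ref{eq:no_edges_2}) together with the degree counting produce an $(s,t)$-hole for that split. The algorithm runs this for each split of $\delta(G)+1$ in turn, either closing the path or collecting the hole for that split, and so assembles the full family of holes required for the certificate (plus a separate, easy construction of the certificate when $G$ is disconnected, which your sketch also omits). If you want to salvage a rotation--extension route you would have to show how the terminal P\'osa configuration yields holes of every shape $(i,\delta(G)+1-i)$, and that is precisely the step your proposal leaves unproved.
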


Theorem \ref{thm:disjointcycles} can also be made algorithmic.
One can repeatedly use the algorithm in Theorem \ref{thm:algorithm}
to find a Hamilton cycle, remove its edges from $G$ and repeat,
or if no cycle is found, output a certificate that $\widetilde{\alpha}(G)$ is large.
This yields:

\begin{restatable}{thm}{cor}
\label{thm:cor}
  There is an algorithm that, on input a graph $G$ with $n \geq 3$ vertices,
  in $O(n^4)$ time outputs a non-negative integer $r$,
  a collection of $r$ edge-disjoint Hamilton cycles of $G$,
  and a certificate that $\widetilde{\alpha}(G) > \frac{\delta(G) - 3r}{r+1}$.
\end{restatable}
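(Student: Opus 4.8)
The plan is to iterate the algorithm of Theorem~\ref{thm:algorithm}, peeling off one Hamilton cycle at a time. Concretely, set $H_0 = G$ and $r = 0$; given $H_j$ (a graph on the same $n \ge 3$ vertices as $G$), run the algorithm of Theorem~\ref{thm:algorithm} on $H_j$. If it returns a Hamilton cycle $C_{j+1}$, record $C_{j+1}$, set $H_{j+1} = H_j - E(C_{j+1})$, increment $r$, and continue; if instead it returns a certificate that $\widetilde{\alpha}(H_j) > \delta(H_j)$, then halt and output the current value of $r$, the recorded cycles $C_1, \dots, C_r$, and this certificate. The recorded cycles are pairwise edge-disjoint, since the edges of each are deleted before the next is sought, and each is a Hamilton cycle of a subgraph of $G$ and hence of $G$ itself.

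For the running time, each pass costs $O(n^3)$ by Theorem~\ref{thm:algorithm}, so it remains to bound the number of passes. Each removed Hamilton cycle uses exactly $n$ edges, and these edge sets are disjoint, so $rn \le \binom{n}{2}$ and hence $r < n/2$. The loop therefore makes at most $r + 1 = O(n)$ passes, for a total of $O(n^4)$ time.

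It remains to convert the halting certificate into the asserted bound on $\widetilde{\alpha}(G)$. Write $G_r = H_r = G - E(C_1) - \cdots - E(C_r)$. Two facts are needed. First, each of the $r$ edge-disjoint Hamilton cycles removes exactly two edges at every vertex, so $\delta(G_r) = \delta(G) - 2r$ exactly. Second, and this is the crux, one needs the quantitative estimate underlying Theorem~\ref{thm:disjointcycles}, namely $\widetilde{\alpha}(G_r) \le (r+1)\,\widetilde{\alpha}(G) + r$. Combining these with the halting certificate $\widetilde{\alpha}(G_r) > \delta(G_r)$ gives
\[
  (r+1)\,\widetilde{\alpha}(G) + r \;\ge\; \widetilde{\alpha}(G_r) \;>\; \delta(G_r) \;=\; \delta(G) - 2r ,
\]
whence $(r+1)\,\widetilde{\alpha}(G) > \delta(G) - 3r$ and so $\widetilde{\alpha}(G) > \frac{\delta(G) - 3r}{r+1}$, as required. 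Thus the halting certificate for $G_r$, together with the list of removed cycles (which lets a verifier recompute $\delta(G_r)$ and check the exhibited bipartite holes), serves as the desired certificate for $G$.

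The one substantive ingredient is the inequality $\widetilde{\alpha}(G_r) \le (r+1)\,\widetilde{\alpha}(G) + r$. It is not immediate, because a bipartite hole in $G_r$ need not be a bipartite hole in $G$: the two parts may be joined in $G$ by edges lying on the deleted cycles, so one must recover a hole in $G$ that is smaller only by a controlled factor. This is exactly the estimate that makes the iterated-deletion proof of Theorem~\ref{thm:disjointcycles} go through, since it guarantees that after removing $j \le r$ cycles the minimum degree still dominates the bipartite-hole-number and Theorem~\ref{thm:extremal} keeps applying, and I would simply reuse it. Everything else is routine: edge-disjointness is built into the peeling, the bookkeeping $\delta(G_r) = \delta(G) - 2r$ is exact because each cycle is $2$-regular, the bound $r < n/2$ controls the number of passes, and the closing inequality is elementary.
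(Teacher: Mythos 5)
Your proposal is correct and matches the paper's approach: the paper likewise just iterates the algorithm of Theorem~\ref{thm:algorithm}, deleting each found Hamilton cycle, and (implicitly) combines Lemma~\ref{thm:disjoint_preliminary} with $\delta(G_r)=\delta(G)-2r$ exactly as you do to convert the final certificate. Your write-up is in fact more explicit than the paper's one-sentence justification, and the quantitative inequality you flag as the crux is precisely Lemma~\ref{thm:disjoint_preliminary}.
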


Containing a large bipartite hole is not a certificate for the absence of
Hamilton cycles;
there are Hamiltonian graphs for which the algorithm will stop before outputting
a Hamilton cycle, which is to be expected,
since deciding whether or not a graph is Hamiltonian is NP-complete.

We conclude the paper by applying Theorem~\ref{thm:disjointcycles} to
show quickly that for a sufficiently dense random graph $G$,
the probability of $G$ failing to contain many edge-disjoint Hamilton cycles
is well-estimated by the probability that $G$ contains a vertex
with too small degree ($<2r$), or indeed contains an isolated vertex.

\begin{thm}
  \label{thm:random_disjoint}
  Let $0<\epsilon<1$, let $0 \leq p=p(n) \leq 1-\epsilon$,
  and let $r=r(n)$ be a positive integer.
  If $\frac{p(n)\sqrt{n}}{r(n)\log n} \to \infty$ as $n \to \infty$,
  then the probability that $G(n, p)$ fails to contain at least $r$ edge-disjoint
  Hamilton cycles is $(1-p)^{(1+o(1))n}$.
\end{thm}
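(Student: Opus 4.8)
The plan is to show that the failure probability is squeezed between two quantities, each equal to $(1-p)^{(1+o(1))n}$: a lower bound supplied by the presence of an isolated vertex, and a matching upper bound supplied by Theorem~\ref{thm:disjointcycles} together with tail estimates for $\widetilde{\alpha}$ and for the minimum degree. Throughout I write $L=-\log(1-p)$ and $\mu=(n-1)p/(1-p)$; since $0\le p\le 1-\epsilon$ we have $p\le L\le p/\epsilon$ and $\mu=\Theta(np)$, while the hypothesis $p\sqrt{n}/(r\log n)\to\infty$ gives $\log n=o(\sqrt{n}\,p)$ and in particular $np\to\infty$. Applying Theorem~\ref{thm:disjointcycles} with $r$ replaced by $r-1$, the graph $G$ contains $r$ edge-disjoint Hamilton cycles whenever $\delta(G)\ge r\,\widetilde{\alpha}(G)+3(r-1)$. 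Hence, writing $F$ for the event that $G=G(n,p)$ fails to contain $r$ edge-disjoint Hamilton cycles, we have $F\subseteq\{\delta(G)<r\,\widetilde{\alpha}(G)+3(r-1)\}$.

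For the lower bound I would observe that an isolated vertex forces $\delta(G)=0<2r$ and hence lies in $F$. A standard first- and second-moment computation gives $\Pr[G\text{ has an isolated vertex}]=(1-o(1))\,n(1-p)^{n-1}$, the second-moment term being negligible because $n(1-p)^{n}\to0$ (as $np\gg\log n$). Since $\log n=o(nL)$, this equals $(1-p)^{(1+o(1))n}$, so $\Pr[F]\ge(1-p)^{(1+o(1))n}$.

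For the matching upper bound I would fix $k=\lceil 3\sqrt{n}\rceil$ and $D=rk+3(r-1)$, so that $F\subseteq\{\widetilde{\alpha}(G)>k\}\cup\{\delta(G)<D\}$, and bound the two events separately. If $\widetilde{\alpha}(G)>k$ then, taking the balanced split, $G$ contains an $(s,t)$-bipartite-hole with $s+t=k+1$ and $st\ge k^2/4$; a union bound over the at most $n^{k+1}$ choices gives probability at most $n^{k+1}(1-p)^{k^2/4}$, whose exponent $(k+1)\log n-\tfrac14 k^2L$ is at most $-(1+o(1))nL$, because $k^2/4\ge\tfrac94 n$ dominates while $(k+1)\log n=O(\sqrt{n}\log n)=o(nL)$; thus $\Pr[\widetilde{\alpha}(G)>k]\le(1-p)^{(1+o(1))n}$. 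For the degree event, a union bound over vertices together with the binomial lower tail gives $\Pr[\delta(G)<D]\le n(1-p)^{n-1}\,D\,\mu^{D-1}/(D-1)!$, valid because $D=o(\mu)$; the logarithmic surplus of this bound over $(1-p)^{n}$ is $\log n+\log D+(D-1)\log\!\frac{e\mu}{D-1}$, and the key estimate $D\log(np)=O\!\bigl(r\sqrt{n}\log(np)\bigr)=o(np)$ shows this surplus is $o(nL)$, so $\Pr[\delta(G)<D]=(1-p)^{(1+o(1))n}$. Combining the three inequalities yields $\Pr[F]=(1-p)^{(1+o(1))n}$.

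The crux, and the place where the hypothesis is used in an essential, tight way, is the simultaneous choice of $k$. On the one hand $k$ must be of order $\sqrt{n}$, rather than the much smaller typical order $\log n/p$ of $\widetilde{\alpha}(G(n,p))$, so that the crude union bound for $\{\widetilde{\alpha}>k\}$ can beat the possibly exponentially small main term $(1-p)^{n}$. On the other hand the resulting threshold $D\asymp r\sqrt{n}$ must stay small enough that a vertex of degree in $[1,D)$ is, up to a $(1-p)^{o(n)}$ factor, no more likely than an isolated vertex; this is exactly the requirement $r\sqrt{n}\log(np)=o(np)$, i.e.\ $r\log n=o(\sqrt{n}\,p)$, which is precisely the hypothesis. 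Verifying that these two constraints are compatible, and that the binomial-tail surplus is genuinely $o(nL)$ uniformly as $p$ ranges over $(0,1-\epsilon]$, is the main technical obstacle.
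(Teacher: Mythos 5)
Your proposal is correct and follows essentially the same route as the paper: both sandwich the failure probability between the isolated-vertex lower bound and an upper bound obtained from Theorem~\ref{thm:disjointcycles} by union-bounding the events $\{\widetilde{\alpha}(G)>k\}$ (via a balanced bipartite hole of size $\Theta(\sqrt{n})$, costing $(1-p)^{\Theta(n)}$ against a polynomial-in-$n^{\sqrt n}$ factor) and $\{\delta(G)<D\}$ with $D=\Theta(r\sqrt n)$, using the hypothesis exactly where you say, namely to make $r\sqrt{n}\log n=o(n|\log(1-p)|)$. The only differences are cosmetic: the paper takes $k=2\lceil\sqrt n\rceil$ and a $(t,t)$-hole rather than $k=\lceil 3\sqrt n\rceil$, bounds the degree tail by $n^{D}(1-p)^{n}\epsilon^{-D}$ rather than via $\mu^{D-1}/(D-1)!$, and uses the trivial bound $\mathbb{P}(\delta=0)\ge(1-p)^{n}$ in place of your (unneeded) second-moment estimate.
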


\noindent
Setting $r = 1$ we obtain:
\begin{thmcoro}
  \label{thm:random_cycle}
  If $p(n)\sqrt{n}/ \log n \to \infty$
  as $n \to \infty$,
  then the probability that $G(n, p)$ fails to be Hamiltonian is $(1-p)^{(1 + o(1))n}$.
\end{thmcoro}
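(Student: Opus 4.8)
The statement is exactly the case $r=1$ of Theorem~\ref{thm:random_disjoint}, so formally it is immediate; the plan below sketches how I would instead establish this Hamiltonicity threshold directly from Theorem~\ref{thm:extremal}, since that is where the content lies. Write $G=G(n,p)$ and $q=\ln\tfrac{1}{1-p}$, so that $(1-p)^{(1+o(1))n}=e^{-(1+o(1))nq}$. The hypothesis $p\sqrt n/\log n\to\infty$ forces $np\gg\sqrt n\log n$, in particular $np\to\infty$ and (since $q\ge p$) $\log n=o(nq)$; these two facts drive everything. I would prove matching lower and upper bounds on the failure probability.

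For the \emph{lower bound} I would use the isolated-vertex event, which already prevents a Hamilton cycle. The expected number of isolated vertices is $\mu=n(1-p)^{n-1}$, and since $np\to\infty$ we have $\mu\to0$. A Bonferroni/second-moment estimate (the expected number of isolated pairs is $O(\mu^2)=o(\mu)$, using $1-p\ge\epsilon$) gives $\Pr(\text{at least one isolated vertex})=(1+o(1))\mu$. Because $\log n=o(nq)$ we have $\ln\mu=\log n-(n-1)q=-(1+o(1))nq$, so the failure probability is at least $(1-p)^{(1+o(1))n}$.

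For the \emph{upper bound} I would invoke Theorem~\ref{thm:extremal}: if $G$ is not Hamiltonian (and $n\ge3$) then $\delta(G)<\widetilde{\alpha}(G)$. Fixing a threshold $k=2\lceil\sqrt n\rceil$, I split
\[
\Pr\big(\delta(G)<\widetilde{\alpha}(G)\big)\le\Pr\big(\delta(G)<k\big)+\Pr\big(\widetilde{\alpha}(G)\ge k\big),
\]
which is valid because if $\delta(G)\ge k$ and $\widetilde{\alpha}(G)<k$ then $\delta(G)\ge\widetilde{\alpha}(G)$. For the bipartite-hole term, $\widetilde{\alpha}(G)\ge k$ forces a $(k/2,k/2)$-bipartite-hole, so a union bound gives $\Pr(\widetilde{\alpha}(G)\ge k)\le n^{k}(1-p)^{k^2/4}=e^{k\log n-\frac{k^2}{4}q}$; with $k\sim2\sqrt n$ the exponent is $-nq+O(\sqrt n\log n)=-(1-o(1))nq$, the error being $o(nq)$ precisely because $p\sqrt n/\log n\to\infty$. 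For the minimum-degree term I would use the sharp lower-tail estimate for $\mathrm{Bin}(n-1,p)$: since $k=o(np)$ one gets $\Pr(\mathrm{Bin}(n-1,p)<k)=e^{-(1-o(1))nq}$, whence $\Pr(\delta(G)<k)\le n\,e^{-(1-o(1))nq}=e^{-(1-o(1))nq}$. Both terms are $(1-p)^{(1-o(1))n}$, and combining with the lower bound yields the claimed $(1-p)^{(1+o(1))n}$.

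The main obstacle is calibrating the threshold $k$. The two error terms pull in opposite directions: enlarging $k$ shrinks the bipartite-hole probability but inflates the low-degree probability, so one must choose $k$ at the unique scale $k\sim2\sqrt n$ where both are simultaneously $(1-p)^{(1-o(1))n}$ rather than merely exponentially small. This is exactly where $\sqrt n$, and hence the hypothesis $p\sqrt n/\log n\to\infty$, enters: it is the condition that makes the union-bound entropy $k\log n$ negligible against the non-edge cost $\tfrac{k^2}{4}q\sim nq$. A secondary technical point is that a crude Chernoff bound is too weak for the low-degree term, since it would only give an exponent proportional to $np$ with a constant below $1$; one needs the precise binomial lower tail so that the exponent is genuinely $(1-o(1))nq$.
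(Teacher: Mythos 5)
Your proposal is correct and follows essentially the same route as the paper: the paper's Lemma~\ref{thm:random_elaborate} proves exactly your decomposition $A^c_1 \subseteq \{\widetilde{\alpha}(G) > 2t\} \cup \{\delta(G) < 2t\}$ with $t = \lceil\sqrt{n}\rceil$ (via Theorem~\ref{thm:disjointcycles}, which at $r=0$ is Theorem~\ref{thm:extremal}), and bounds both terms by the same union-bound and binomial-tail estimates you describe. The only cosmetic difference is your lower bound, where a second-moment argument for isolated vertices is more than needed --- the paper simply uses $\mathbb{P}(\delta(G)=0) \ge (1-p)^n$.
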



\section{Related work}

Finding sufficient conditions for the existence of Hamilton cycles
has been an active area of research for more than sixty years.
Among the most well-known conditions are Dirac's Theorem \cite{dirac},
Theorem~\ref{thm:Dirac};
and a generalisation by Ore \cite{ore}, which states that
an $n$-vertex graph $G$ is Hamiltonian if
$d(u) + d(v) \ge n$ for any pair of non-adjacent vertices $u$ and $v$.
These were further generalised by Bondy and Chv\'atal, and others,
see the book by Bondy and Murty~\cite{bondy_murty} and see~\cite{gould,li} for surveys. 
Both conditions are further generalised by Fan \cite{fan},
where he proved that a $2$-connected graph $G$ of order $n$
is Hamiltonian if $\max(d(u), d(v)) \ge n/2$
for every pair of nonadjacent vertices $u, v$ with distance $2$.
See \cite{survey} for a survey.

One of these generalisations, by Chv\'atal and Erd\H{o}s \cite{chvatal_erdos},
has a sharp condition close to the one in this paper.
We denote the vertex connectivity of $G$ by $\kappa(G)$
and the number of vertices of $G$ by $v(G)$.

\begin{thm}[Chv\'atal-Erd\H{o}s Theorem]
  \label{thm:erdos}
  A graph $G$ with at least 3 vertices is Hamiltonian if $\kappa(G) \ge \alpha(G)$.
\end{thm}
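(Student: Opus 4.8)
The plan is to argue by contradiction using a longest cycle, following the classical rotation/extension method. First I would dispose of the trivial case: if $\alpha(G) = 1$ then $G$ is complete, and a complete graph on at least $3$ vertices is plainly Hamiltonian. So assume $\alpha := \alpha(G) \ge 2$, whence $\kappa := \kappa(G) \ge 2$ and $G$ is $2$-connected; in particular $G$ contains a cycle. Let $C$ be a longest cycle in $G$, fix an orientation of $C$, and for a vertex $x$ on $C$ write $x^+$ for its successor. Suppose, for contradiction, that $C$ is not a Hamilton cycle, and let $F$ be a component of $G - V(C)$; note that every neighbour of $F$ lies on $C$.

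Let $A$ be the set of vertices of $C$ having a neighbour in $F$, and set $A^+ = \{x^+ : x \in A\}$. The heart of the argument is to show that, for any fixed $w \in F$, the set $A^+ \cup \{w\}$ is independent. Granting this, the map $x \mapsto x^+$ is injective, so $|A^+ \cup \{w\}| = |A| + 1$; and since $A$ separates $F$ from $V(C)\setminus A$ (which I will check is non-empty), connectivity gives $|A| \ge \kappa$. Hence $\alpha \ge |A| + 1 \ge \kappa + 1 > \alpha$, a contradiction, which completes the proof modulo the independence claim.

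To establish independence I would use two rerouting observations, each exploiting the maximality of $C$. First, if some $x \in A$ had its successor $x^+$ also adjacent to $F$, then --- since $F$ is connected --- I could replace the edge $x x^+$ of $C$ by a detour $x \to F \to x^+$ through at least one vertex of $F$, producing a strictly longer cycle; so no vertex of $A^+$ has a neighbour in $F$, and in particular $w$ is adjacent to no vertex of $A^+$. (This also shows that $A$ and $A^+$ are disjoint, so $2|A| \le |V(C)|$ and thus $V(C) \setminus A \neq \emptyset$, as needed above.) Second, if two distinct $x, y \in A$ had $x^+$ adjacent to $y^+$, I would combine a path through $F$ joining a neighbour of $x$ to a neighbour of $y$, the chord $x^+ y^+$, and the two arcs of $C$ into a single cycle passing through every vertex of $C$ together with at least one vertex of $F$ --- again strictly longer than $C$. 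Thus $A^+$ is independent and disjoint from $\{w\}$, yielding the required independent set of size $|A| + 1$.

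The main obstacle is this second rerouting step: one must orient $C$ and carefully splice the two arcs $x^+ \to y$ and $y^+ \to x$ together with the $F$-detour and the chord $x^+ y^+$ so that the result is a single cycle visiting each vertex of $C$ exactly once. Verifying that no vertex is repeated and that every edge used is actually present is the delicate bookkeeping, but it is entirely elementary once the orientation and the roles of the four boundary vertices $x, x^+, y, y^+$ are fixed.
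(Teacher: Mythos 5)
Your argument is correct and complete: it is the classical longest-cycle proof of the Chv\'atal--Erd\H{o}s theorem. All the pieces are in place --- the reduction to $\alpha\ge 2$, the observation that $A=N(F)$ is a vertex cut once you know $V(C)\setminus A\supseteq A^+\neq\emptyset$, the injectivity of the successor map, and the two rerouting steps showing $A^+\cup\{w\}$ is independent. The splice you flag as delicate does go through: for distinct $x,y\in A$ with $x^+y^+\in E$, the closed walk $x\to(\text{path through }F)\to y\to(\text{reverse arc})\to x^+\to y^+\to(\text{forward arc})\to x$ uses each vertex of $C$ exactly once plus at least one vertex of $F$, and every edge used is present.

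Be aware, though, that the paper does not prove this statement at all: the Chv\'atal--Erd\H{o}s theorem is quoted as a known result with a citation, so there is no in-paper proof to compare against. It is worth noting that the paper's own main result (Theorem~\ref{thm:extremal}) is proved by a different but related technique --- a maximal \emph{path} rather than a longest cycle, with ``flip'' operations (a), (b), (c) and a counting argument against the bipartite-hole-number $\widetilde{\alpha}$ instead of an independent-set count against $\alpha$. Your independent-set construction $A^+\cup\{w\}$ is exactly the device that does not adapt to $\widetilde{\alpha}$, which is why the paper needs the two-sided edge-counting in equations~(\ref{eq:no_edges_1}) and~(\ref{eq:no_edges_2}); but as a proof of Theorem~\ref{thm:erdos} itself, what you wrote is the standard and correct argument.
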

There are interesting connections between Theorems \ref{thm:extremal}
and \ref{thm:erdos}, and between the parameters
$\kappa$, $\delta$, $\alpha$ and $\widetilde{\alpha}$.
For example, $\kappa(G) \le \delta(G) \le v(G) - \alpha(G)$
and $\alpha(G) \le \widetilde{\alpha}(G) \le v(G) - \kappa(G)$.
Furthermore, we will see in Lemma~\ref{thm:extremal_preliminary}
that $\kappa(G) \ge \delta(G) - \widetilde{\alpha}(G) + 2$.

Comparing Theorems~\ref{thm:extremal}
and~\ref{thm:erdos},  neither condition implies the other.
Here is an example of a graph $G$ that meets the conditions of Theorem
\ref{thm:extremal} but not Theorem \ref{thm:erdos}.
It has vertex set $V(G) = \{a\} \cup B \cup C \cup D$, such that $|B| = k+\ell$,
$|C| = k$, $|D| = \ell+1$, and all these sets are disjoint.
All edges between $\{a\}$ and  $B$, between $B$ and $C$,
and between $C$ and $D$ are present, $B$ and $D$ are complete,
and $C$ is independent.
It is easy to see that $k = \kappa(G) < \alpha(G) = k + 1$,
and $\delta(G) = k + \ell \ge \max\{k+1, 2\ell+3\} = \widetilde{\alpha}(G)$
for $\ell \ge 1$ and $k \ge \ell+3$.
In the other direction, $C_5$ satisfies $\kappa=2=\alpha$
but $\delta=2<3=\widetilde{\alpha}$.

A more recent related result is by Hefetz, Krivelevich and Szab\'o
\cite{Krivelevich_hamilton}.
Roughly speaking, the authors prove that expanding graphs
without large bipartite holes are Hamiltonian.
Their results cover a wide range of graphs, including relatively sparse graphs.
Compared to \cite{Krivelevich_hamilton},
we focus on tight extremal thresholds, simple self-contained proofs
and the right conditions for edge-disjoint Hamilton cycles.

Hamilton cycles in random graphs have been well-studied,
see for example~\cite{Frieze_BCC,bollobas}.
In \cite{komlos_szemeredi} Koml\'os and Szemer\'edi prove that
if
\[
p = p(n) = \frac{k(n)}{{n \choose 2}}; \hspace{10pt}
k(n) = \frac12n \log n + \frac12 n \log \log n + c_nn,
\]
then
\[
\lim_{n \to \infty} \mathbb{P}(G(n, p) \text{ is Hamiltonian}) =
\begin{cases}
  0              &\mbox{if } c_n \to -\infty\\
  e^{-e^{-2c}}   &\mbox{if } c_n \to c \\
  1              &\mbox{if } c_n \to \infty.
\end{cases}
\]
Frieze proves in \cite{Frieze_ham_bipartite} a similar result
for random bipartite graphs.
The evolutionary process $G_{n,t}$ is defined as follows:
$G_{n,0}$ is the empty graph on $n$ vertices and
$G_{n,k+1}$ is obtained from $G_{n,k}$ by adding an edge uniformly at random.
Ajtai, Koml\'os and Szemer\'edi \cite{hitting_time_one}
and Bollob\'as \cite{hitting_time_two}
showed that with high probability the hitting time for Hamiltonicity
equals the hitting time for minimal degree at least two.


\section{Extremal condition for Hamilton cycle}
A necessary condition for a graph to be Hamiltonian is to be 2-connected,
so Theorem \ref{thm:extremal} implies that every graph $G$
with $\delta(G) \ge \widetilde{\alpha}(G)$ is 2-connected.
We give one preliminary lemma before proving
Theorems \ref{thm:extremal} and \ref{thm:algorithm}.

\begin{lemma}
  \label{thm:extremal_preliminary}
  The following holds for every graph $G$:
  \[
  \kappa(G) \ge \delta(G) +2  - \widetilde{\alpha}(G).
  \]
\end{lemma}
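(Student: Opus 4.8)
The plan is to exploit a minimum vertex cut directly. Assume first that $G$ is not complete (the complete case can be checked separately, where $\kappa$ and $\delta$ take their extreme values). Let $\kappa = \kappa(G)$ and fix a minimum vertex cut $C$ with $|C| = \kappa$. By definition $G - C$ is disconnected, so I can partition $V(G) \setminus C$ into two nonempty sets $A$ and $B$ — say one component of $G-C$ and the union of the remaining components — with $E(A, B) = \emptyset$, since every path from $A$ to $B$ must pass through $C$.

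The next step is to lower bound $|A|$ and $|B|$ using the minimum degree. Pick any vertex $a \in A$. Since $a$ has no neighbour in $B$, all of its neighbours lie in $(A \setminus \{a\}) \cup C$, so $\delta(G) \le d(a) \le (|A| - 1) + \kappa$, giving $|A| \ge \delta(G) - \kappa + 1$. The identical argument applied to a vertex $b \in B$ (whose neighbours within $B$ lie in its own component) yields $|B| \ge \delta(G) - \kappa + 1$.

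Finally I translate this into a bound on $\widetilde{\alpha}(G)$ using the maximum-$r$ formulation of the bipartite-hole-number. Set $r = \delta(G) - \kappa + 2$ and take any non-negative integers $s, t$ with $s + t = r$. If $s = 0$ or $t = 0$ an $(s,t)$-bipartite-hole exists trivially, so assume $s, t \ge 1$; then $s = r - t \le r - 1 = \delta(G) - \kappa + 1 \le |A|$ and symmetrically $t \le |B|$, so choosing $S \subseteq A$ and $T \subseteq B$ with $|S| = s$ and $|T| = t$ produces an $(s,t)$-bipartite-hole (any subsets of $A$ and $B$ still have no edges between them). Since this holds for every such pair $s, t$, the equivalent definition gives $\widetilde{\alpha}(G) \ge r = \delta(G) - \kappa + 2$, which rearranges to the claimed inequality $\kappa(G) \ge \delta(G) + 2 - \widetilde{\alpha}(G)$.

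I expect the substantive content to be the degree bound in the second step; everything else is bookkeeping. The one point demanding care is the passage between the two definitions of $\widetilde{\alpha}(G)$ together with the handling of the degenerate values $s = 0$ and $t = 0$ (and of the complete graph), so as to be certain that the \emph{entire} range $s + t = r$ is covered, rather than merely the single pair of sizes $(|A|, |B|)$ coming from the cut.
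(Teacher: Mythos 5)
Your proof is correct and is essentially the paper's argument run in the contrapositive direction: you take a minimum cut $C$ and show both sides of $G-C$ have at least $\delta(G)-\kappa(G)+1$ vertices with no edges between them, hence $\widetilde{\alpha}(G)\ge\delta(G)-\kappa(G)+2$, while the paper assumes a separator $S$ of size less than $\delta(G)+2-\widetilde{\alpha}(G)$ and performs the same degree count on the sets $N[v_i]\setminus S$ to contradict the absence of an $(s,t)$-bipartite-hole with $s+t=\widetilde{\alpha}(G)+1$. The one caveat, which the paper's proof shares by tacitly assuming a separator exists, is that for complete graphs the inequality actually fails under the usual convention $\kappa(K_n)=n-1$ (it would read $n-1\ge n$, since $\widetilde{\alpha}(K_n)=1$), so your remark that the complete case ``can be checked separately'' should not be read as asserting that it holds there.
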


\begin{proof}
  Suppose for a contradiction the vertices $v_1$ and $v_2$ are separated by a set $S$ of size less than $\delta(G) + 2 - \widetilde{\alpha}(G)$.
  Let $s$ and $t$ be positive integers
such that $\widetilde{\alpha}(G) + 1 = s + t$
and $G$ has no $(s, t)$-bipartite-hole.
Then $\frac{\widetilde{\alpha}(G) + 1}{2} \le \max(s, t) \le \widetilde{\alpha}(G)$.
Now the closed neighbourhoods $N[v_i]$ satisfy
$|N[v_i] \setminus S| \ge \delta(G) + 1 - |S| \ge \widetilde{\alpha}(G) \ge \max(s, t)$.
  The sets $N[v_1] \setminus S$ and $N[v_2] \setminus S$ are disjoint
  because $S$ is a separator,
  but $|N[v_1] \setminus S| \ge s$ and $|N[v_2] \setminus S| \ge t$,
  so there is an edge between them and $S$ does not separate $v_1$ from $v_2$,
  a contradiction.
\end{proof}

As an aside before proving Theorem~\ref{thm:extremal},
suppose the graph $G$ with at least 3 vertices satisfies
$\delta(G) \ge 2 \widetilde{\alpha}(G) - 2$.
Then
\[
\kappa(G) \ge \delta(G) + 2 - \widetilde{\alpha}(G)
\ge \widetilde{\alpha}(G) \ge \alpha(G).
\]
Hence the conditions of the Chv\'atal-Erd\H{o}s Theorem are met, and so $G$ is Hamiltonian.

\extremallemma*
\begin{proof}
  If $\widetilde{\alpha}(G) = 1$, then $G$ is complete,
  and so $G$ is Hamiltonian.
  Thus we may suppose that $\widetilde{\alpha}(G) \ge 2$.
  We will show that if $P$ is a maximal length
  path in $G$, then $G[V(P)]$ is Hamiltonian.
  This, together with the connectedness of $G$
  following from  Lemma \ref{thm:extremal_preliminary},
  is enough to complete the proof.

  Indeed, suppose $P$ is a maximal length path in $G$, $n = v(P)$,
  and label the vertices in $V(P)$ with $[n] := \{1, \ldots, n\}$
  in the order they appear in the path, after choosing an arbitrary orientation.
  We may assume that vertices 1 and $n$ are not adjacent.
  For a set $S \subseteq V(P)$,
  define $S^+$ to be the set of successors $x^+$ of elements $x$ in $S$, and
  define $S^-$ to be the set of predecessors~$x^-$.
  We leave $S^+$ undefined if $n \in S$ and $S^-$ is undefined if $1 \in S$.

  We now describe three situations when $P$ can be closed to form a cycle.
  The first yields a standard proof of Dirac's and Ore's theorems,
  the second involves `non-crossing' edges from the end vertices,
  and the third involves `crossing edges'.
  \begin{description}
    \item{(a)}
      If for some $j \in (1,n)$ we have $j \in N(1)$ and $j^- \in N(n)$,
      then $1j-nj^--1$ is a spanning cycle of $V(P)$
      (where we follow the path $P$ from $j$ to $n$ and from $j^-$ to $1$).
      See Figure~1.
      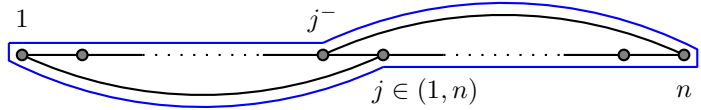
\begin{figure}[h]
        \label{fig:single_flip}
        \centering
        \begin{tikzpicture}[thick,scale=0.8]
    \draw                           (5,0)             arc  (115:65:7.1);
    \draw                           (0,0)             arc  (-115:-65:7.1);
    \draw \foreach \x in {0,1} {
      (\x+0,0)
      node [circle, draw, fill=black!50, inner sep=0pt, minimum width=4pt] {}
      -- (\x+1,0)
    };
    \draw [loosely dotted]          (2,0)             --   (4,0);
    \draw                           (4,0)             --   (5,0);
    \draw \foreach \x in {5,...,6}  {
      (\x+0,0)
      node [circle, draw, fill=black!50, inner sep=0pt, minimum width=4pt] {}
      --   (\x+1,0)};
    \draw [loosely dotted]          (7,0)             --   (9,0);
    \draw \foreach \x in {9,10}    {
      (\x+0,0)          --   (\x+1,0)
      node [circle, draw, fill=black!50, inner sep=0pt, minimum width=4pt] {}
    };

    \draw [blue]                     (0,0.2)           --   (5,0.2);
    \draw [blue]                     (5,0.2)           arc  (115:63:7.1);
    \draw [blue]                     (11.22,0.1)       --   (11.22,-0.21);
    \draw [blue]                     (6,-0.2)          --   (11.22,-0.2);
    \node  at (0,0.6) {$1$};
    \node  at (5,0.6) {$j^-$};
    \node  at (6.7,-0.6) {$j \in (1, n)$};
    \node  at (11,-0.6) {$n$};

    \draw [blue]                     (6,-0.2)          arc  (-65:-117:7.1);
    \draw [blue]                     (-0.22,-0.1)      --   (-0.22,0.2);
    \draw [blue]                     (-0.22,0.2)       --   (0,0.2);
\end{tikzpicture}
        \caption{Single flip}
      \end{figure}

    \item{(b)}
      If for some $k \in (1, n)$ there exist
      $i \in N(1) \cap (1, k]$ and $j \in N(n) \cap [k, n)$
      such that $i^-$ is adjacent to $j^+$,
      then $1 - i^-j^+ - nj-i1$ is a spanning cycle of $V(P)$.
      Here we may have $i=j$;
      see Figure~2.
      \begin{figure}[h]
        \label{fig:nested_flip}
        \centering
        \begin{tikzpicture}[thick,scale=0.8]
    \draw                           (0,0)             arc  (-115:-65:5.91);
    \draw                           (4,0)             arc  (115:65:7.1);
    \draw                           (9,0)             arc  (-115:-65:5.91);
    \draw {
      (0,0)
      node [circle, draw, fill=black!50, inner sep=0pt, minimum width=4pt] {}
      -- (1,0)
    };
    \draw [loosely dotted]          (1,0)             --   (3,0);
    \draw                           (3,0)             --   (4,0);
    \draw \foreach \x in {4,5}  {
      (\x+0,0)
      node [circle, draw, fill=black!50, inner sep=0pt, minimum width=4pt] {}
      --   (\x+1,0)};
    \draw [loosely dotted]          (6,0)             --   (8,0);
    \draw                           (8,0)             --   (9,0);
    \draw \foreach \x in {9,10}  {
      (\x+0,0)
      node [circle, draw, fill=black!50, inner sep=0pt, minimum width=4pt] {}
      --   (\x+1,0)};
    \draw [loosely dotted]          (11,0)            --   (13,0);
    \draw                           (13,0)            --   (14,0)
      node [circle, draw, fill=black!50, inner sep=0pt, minimum width=4pt] {}
      ;
    \draw [blue]                     (0,0.2)           --   (4,0.2);
    \draw [blue]                     (4,0.2)           arc  (115:65:7.1);
    \draw [blue]                     (10,0.2)          --   (14.22,0.2);
    \draw [blue]                     (14.22,0.21)      --   (14.22,-0.1);
    \draw [blue]                     (9,0.-0.2)        arc  (-115:-62.5:5.91);
    \draw [blue]                     (5,0.-0.2)        --   (9,0.-0.2);
    \node  at (0,0.6) {$1$};
    \node  at (4,0.6) {$i^-$};
    \node  at (5.7,-0.6) {$i \in (1, k]$};
    \node  at (8.4,-0.6) {$j \in [k, n)$};
    \node  at (10,0.6) {$j^+$};
    \node  at (14,0.6) {$n$};

    \draw [blue]                     (5,-0.2)          arc  (-65:-117.5:5.91);
    \draw [blue]                     (-0.22,-0.1)      --   (-0.22,0.2);
    \draw [blue]                     (-0.22,0.2)       --   (0,0.2);

\end{tikzpicture}
        \caption{Double nested flip}
      \end{figure}
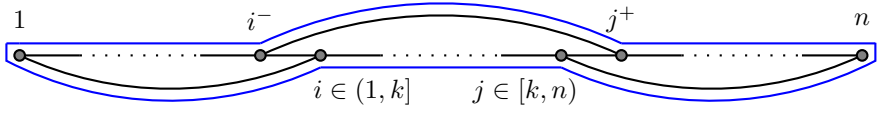

    \item{(c)}
      If for some $k \in (1, n)$ there exist
      $i \in N(1) \cap [k, n)$ and $j \in N(n) \cap [1, k)$
      such that $i^+$ is adjacent to $j^+$,
      then $1-jn-i^+j^+-i1$ is a spanning cycle of $V(P)$.
      Here we may have $j^+ =i$; see Figure~3.
      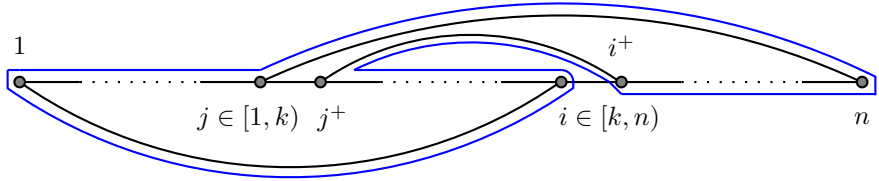
\begin{figure}[h]
        \label{fig:cross_flip}
        \centering
        \begin{tikzpicture}[thick,scale=0.8]
    \draw                           (4,0)             arc  (115:65:11.82);
    \draw                           (5,0)             arc  (125:55:4.34);
    \draw                           (0,0)             arc  (-125:-55:7.83);
    \draw {
      (0,0)
      node [circle, draw, fill=black!50, inner sep=0pt, minimum width=4pt] {}
      -- (1,0)
    };
    \draw [loosely dotted]          (1,0)             --   (3,0);
    \draw                           (3,0)             --   (4,0);
    \draw \foreach \x in {4,5}  {
      (\x+0,0)
      node [circle, draw, fill=black!50, inner sep=0pt, minimum width=4pt] {}
      --   (\x+1,0)};
    \draw [loosely dotted]          (6,0)             --   (8,0);
    \draw                           (8,0)             --   (9,0);
    \draw \foreach \x in {9,10}  {
      (\x+0,0)
      node [circle, draw, fill=black!50, inner sep=0pt, minimum width=4pt] {}
      --   (\x+1,0)};
    \draw [loosely dotted]          (11,0)            --   (13,0);
    \draw                           (13,0)            --   (14,0)
      node [circle, draw, fill=black!50, inner sep=0pt, minimum width=4pt] {}
      ;
    \draw [blue]                     (0,0.2)          --   (4,0.2);
    \draw [blue]                     (4,0.2)          arc  (115:63.7:11.82);
    \draw [blue]                     (14.22,-0.22)    --   (14.22,0.09);
    \draw [blue]                     (10,-0.2)        --   (14.22,-0.2);
    \draw [blue]                     (10,-0.2)        --   (9.8,0);
    \draw [blue]		     (9.8,0)          arc  (58:116.6:4.34);
    \draw [blue]                     (5.56,0.2)       --   (9,0.2);
    \draw [blue]		     (9,0.2)          arc  (90:0:0.2);
    \draw [blue]                     (9.2,0)          --   (9.2,-0.11);
    \draw [blue]		     (9.2,-0.1)       arc  (-55:-125:8.2);
    \draw [blue]                     (-0.2,0.2)       --   (-0.2,-0.11);
    \draw [blue]                     (-0.2,0.2)       --   (0,0.2);
    \node  at (0, 0.6) {$1$};
    \node  at (3.8, -0.6) {$j\in[1, k)$};
    \node  at (5.2, -0.6) {$j^+$};
    \node  at (9.8,  -0.6) {$i\in[k, n)$};
    \node  at (10, 0.6) {$i^+$};
    \node  at (14,-0.6) {$n$};
\end{tikzpicture}
        \caption{Double cross flip}
      \end{figure}

  \end{description}

  We shall show that at least one of these situations must hold.
  Suppose for a contradiction that this is not the case.
  Then for every $k \in (1, n)$
\begin{equation}  \label{eq:no_edges_1}
   E[(N(1) \cap (1, k])^-, (N(n) \cap [k, n))^+] = \emptyset
\end{equation}
since (b) does not hold; and
\begin{equation}  \label{eq:no_edges_2}
    E[\{1\} \cup (N(1) \cap [k, n))^+,  (N(n) \cap [1, k))^+] = \emptyset
\end{equation}
since (a) and (c) do not hold.

Let $1 \leq s \le t$ be such that $\widetilde{\alpha}(G) + 1 = s + t$
and $G$ has no $(s, t)$-bipartite-hole.
Since $\widetilde{\alpha}(G) \ge 2$,
we have $s \le \frac{\widetilde{\alpha}(G) + 1}2 < \widetilde{\alpha}(G)$,
and hence
\[
  |N(1) \cap (1, 2]| =1 \leq s \le \delta(G) - 1 < |N(1) \cap (1, n]| = d(1).
\]
Therefore we can choose $k \in (1, n)$ such that $|N(1) \cap (1, k]| = s$.
  Equation~(\ref{eq:no_edges_1}) implies that $|N(n) \cap [k, n)| < t$.
  Since $|N(n) \cap [1, k)| + |N(n) \cap [k, n)| \ge \delta(G)$,
  we have $|N(n) \cap [1, k)| > \delta(G) - t \ge \widetilde{\alpha}(G) - t = s - 1$,
  and so $|N(n) \cap [1, k)| \ge s$.
  Now from (\ref{eq:no_edges_2}) we deduce $|N(1) \cap [k, n)| < t - 1$,
  hence $|N(1) \cap [k, n)| \le t - 2$.
  Finally, since $1$ is not adjacent to $n$, we have
  \[
    \delta(G) \le |N(1) \cap (1, k]| + |N(1) \cap [k, n)| \le s +t - 2 \le \delta(G) - 1,
  \]
  and this contradiction completes the proof.
\end{proof}

Next we consider edge-disjoint Hamilton cycles.
We need a preliminary lemma.
For graphs $F$ and $G$ with the same vertex set $V$, we define
$F \cup G = (V, E(F) \cup E(G))$ and $F - G = (V, E(F) \setminus E(G))$.

\begin{lemma}
  \label{thm:disjoint_preliminary}
  Suppose $H_1, \ldots, H_r$ are $r \ge 1$
  Hamilton cycles in a graph $G$ and let $H = H_1 \cup \ldots \cup H_r$.
  Then $\widetilde{\alpha}(G - H) + 1 \le (r + 1)(\widetilde{\alpha}(G) + 1)$.
\end{lemma}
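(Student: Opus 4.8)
The plan is to prove the inequality by exhibiting, for $G-H$, a single pair of part-sizes whose sum is $(r+1)(\widetilde{\alpha}(G)+1)$ and which admits no bipartite hole; since $\widetilde{\alpha}(G-H)$ is by definition the least value expressible as $s+t-1$ with no $(s,t)$-bipartite-hole in $G-H$, producing one such pair $(s,t)$ with $s+t=(r+1)(\widetilde{\alpha}(G)+1)$ immediately gives $\widetilde{\alpha}(G-H)\le (r+1)(\widetilde{\alpha}(G)+1)-1$, which is exactly the claim. Write $\beta=\widetilde{\alpha}(G)+1$. First I would fix positive integers $s_0,t_0$ with $s_0+t_0=\beta$ for which $G$ has no $(s_0,t_0)$-bipartite-hole (such a pair exists by definition of $\widetilde{\alpha}(G)$); since an $(s,t)$-bipartite-hole is the same object as a $(t,s)$-bipartite-hole with the two sides interchanged, I may assume $s_0\le t_0$.

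The key sub-claim is that $G-H$ has no $(s_0,\,(r+1)\beta-s_0)$-bipartite-hole, a pair which is positive because $(r+1)\beta-s_0\ge 2\beta-s_0=s_0+2t_0\ge 1$. I would prove this by contradiction: suppose $(S,T)$ were such a hole in $G-H$, with $|S|=s_0$ and $|T|=(r+1)\beta-s_0$. Because $(S,T)$ has no edges in $G-H$, every edge of $G$ joining $S$ to $T$ must lie in $H$. The decisive structural input is that $H=H_1\cup\cdots\cup H_r$ is a union of $r$ Hamilton cycles and so has maximum degree at most $2r$; consequently the set of vertices of $T$ that have an $H$-neighbour in $S$ has size at most $2r\,|S|=2rs_0$.

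Counting the remaining vertices of $T$, at least $|T|-2rs_0=(r+1)\beta-(2r+1)s_0=(r+1)t_0-rs_0$ vertices of $T$ have no $H$-neighbour in $S$, and this quantity is at least $t_0$ precisely because $s_0\le t_0$. Choosing any $t_0$ of these as $T_0$ and keeping all of $S$, there are no $H$-edges, and hence (by the observation of the previous paragraph) no $G$-edges at all, between $S$ and $T_0$. Thus $(S,T_0)$ is an $(s_0,t_0)$-bipartite-hole in $G$, contradicting the choice of $(s_0,t_0)$. This establishes the sub-claim, and with it $\widetilde{\alpha}(G-H)\le(r+1)\beta-1=(r+1)(\widetilde{\alpha}(G)+1)-1$, as required.

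I do not expect a genuine obstacle once the argument is set up, since it reduces to a one-line degree count. The one point that needs care, and the place where a first attempt naturally goes wrong, is the \emph{asymmetric} way the budget $(r+1)\beta$ is split between the two sides. Trying to show that $G-H$ avoids the balanced pair $((r+1)s_0,(r+1)t_0)$ fails at the boundary, for instance when the edges of $H$ induce a complete bipartite graph on a set of size $2s_0+2t_0$; so the essential idea is to load almost the entire budget onto one side, taking the $S$-part to have size exactly $s_0$, so that the maximum-degree bound on $H$ leaves enough room in $T$ to extract the smaller part $T_0$.
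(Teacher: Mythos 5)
Your proof is correct and follows essentially the same route as the paper: both arguments bound the $H$-neighbourhood of the small side by $2r|S|$ (each Hamilton cycle contributing degree $2$) and then extract an $(s_0,t_0)$-bipartite-hole of $G$ from a putative hole of $G-H$, using $s_0\le t_0$ to make the count work. The only cosmetic difference is that the paper exhibits a hole-free pair $(s_0,\,2rs_0+t_0)$ and then applies $s_0\le\frac{\widetilde{\alpha}(G)+1}{2}$, whereas you pad the large side up to exactly $(r+1)(\widetilde{\alpha}(G)+1)-s_0$ from the start.
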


\begin{proof}
  Let $1 \leq s \le t$ be such that $\widetilde{\alpha}(G) + 1 = s + t$
  and $G$ has no $(s, t)$-bipartite-hole. Let $U, W \subseteq V(G)$ be disjoint sets
  of size $s$ and $2rs + t$ respectively. Now
\[
  |W \setminus \Gamma_H(U)|
  \ge |W| - \sum_{i=1}^r |\Gamma_{H_i}(U)| \geq 2rs + t - 2rs = t.
\]
  But $G$ has no $(s, t)$--bipartite-hole, so $G - H$ has no $(s, 2rs + t)$--bipartite-hole.
  Finally, we see that
  $\widetilde{\alpha}(G - H) + 1 \le s + 2rs + t \le (r + 1)(\widetilde{\alpha}(G) + 1)$,
  since $s \le \frac{\widetilde{\alpha}(G) + 1}2$.
\end{proof}

\disjointcycles*
\begin{proof}
  We sequentially find edge-disjoint Hamilton cycles $H_1, H_2, \ldots$.
  Let $0 \le i \le r$ and suppose we have found $H_1, \ldots, H_i$.
  Let $G_i = G - \cup_{j \le i}H_j$.
  Then by Lemma~\ref{thm:disjoint_preliminary}
  \begin{align*}
    \widetilde{\alpha}(G_i)
    \le (i+1)(\widetilde{\alpha}(G)+1) - 1
    \le (r+1)\widetilde{\alpha}(G) + r
    \le \delta(G) - 2r
    \le \delta(G_i).
  \end{align*}
  Hence by Theorem~\ref{thm:extremal} we can find $H_{i+1}$ edge-disjoint from
  $H_1, \ldots H_i$.
\end{proof}

A \emph{certificate} that $\widetilde{\alpha}(G) \ge k$
consists of pairs $(S_i, T_i)$
for $i=1,\ldots,\lfloor k/2\rfloor$
such that $S_i, T_i \subseteq V(G)$, $S_i \cap T_i = \emptyset$,
$E(S_i, T_i) = \emptyset$,
and $|S_i| = i$, $|T_i| = k - i$.

\algorithm*
\begin{proof}
  First check if $G$ is connected.
  If not, pick two connected components and note that each has size at least
  $\delta(G) + 1$.
  For each $i=1,\ldots, \lfloor (\delta(G) +1)/2\rfloor$,
  any $i$ vertices from one of these components together with
  any $\delta(G) + 2 - i$ from the other form a bipartite hole,
  and hence we can find a certificate that $\widetilde{\alpha}(G) \ge \delta(G) + 2$.
  So we can assume that $G$ is connected.

  Maintain a path $P$ with initial length at least two.
  The algorithm performs at most $n$ steps,
  and the length of $P$ increases with each one.
  On each step, check if a terminal vertex of $P$ has a neighbour outside $V(P)$,
  and if so extend $P$.
  Otherwise, following the proof of Theorem \ref{thm:extremal},
  we can either find a sequence of bipartite holes forming a certificate as required and
  halt, or close $P$ to form a cycle.
  This cycle is either Hamiltonian and then the algorithm halts,
  or from the connectivity of $G$ we can attach an edge $xy$
  with $x \in V(P)$ and $y \not\in V(P)$ to obtain a
  strictly longer path starting from $y$ and spanning $V(P) \cup \{y\}$.

  Each step takes $O(n^2)$ time, so the total time spent is $O(n^3)$.
\end{proof}


\section{Application to dense random graphs}

The following result is phrased to cover the existence of one Hamilton cycle,
and of many.

\begin{lemma}
  \label{thm:random_elaborate}
  Fix $0<\epsilon<1$ and let $0 \leq p=p(n) \le 1 - \epsilon$ for all $n$.
  Given $r =r(n) \ge 1$, let $A_r$ be the event that $G(n, p)$ contains at least $r$
  edge-disjoint Hamilton cycles, and let $A^c_r$ be the complementary event.
  Then
  \[
  n\log (1-p) \le \log \mathbb{P}(A^c_r) \le n\log (1-p) +
  (2+o(1)) \, r\sqrt{n}\log n.
  \]
\end{lemma}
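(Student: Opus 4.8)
The plan is to establish the two inequalities separately. For the lower bound I would note that if the (arbitrarily fixed) vertex $v$ is isolated then $G(n,p)$ contains no cycle at all, let alone $r\ge 1$ edge-disjoint Hamilton cycles, so this event lies inside $A_r^c$. Since $\mathbb{P}(v\text{ isolated})=(1-p)^{n-1}\ge(1-p)^n$ (using $0\le 1-p\le 1$) and $\log(1-p)\le 0$, taking logarithms gives
\[
  \log\mathbb{P}(A_r^c)\ge (n-1)\log(1-p)\ge n\log(1-p),
\]
which is the lower bound.

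For the upper bound the key tool is Theorem~\ref{thm:disjointcycles} applied with its parameter equal to $r-1$: if $\delta(G)\ge r\,\widetilde{\alpha}(G)+3(r-1)$ then $G$ has $r$ edge-disjoint Hamilton cycles, i.e.\ $A_r$ holds. Hence $A_r^c\subseteq\{\delta(G)<r\,\widetilde{\alpha}(G)+3(r-1)\}$. I would then fix the threshold $m=2\lceil\sqrt n\rceil-1$ and split according to the size of $\widetilde{\alpha}$: on the event $\{\widetilde{\alpha}(G)\le m\}$ the inclusion forces $\delta(G)<rm+3(r-1)$, so
\[
  A_r^c\subseteq\{\widetilde{\alpha}(G)>m\}\cup\{\delta(G)<rm+3(r-1)\}=:B_1\cup B_2 .
\]
It then suffices to bound each of $\mathbb{P}(B_1)$ and $\mathbb{P}(B_2)$ by $(1-p)^n\exp\big((2+o(1))r\sqrt n\,\log n\big)$.

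To control $\mathbb{P}(B_1)$ I would use the description of $\widetilde{\alpha}$ via bipartite holes: if $\widetilde{\alpha}(G)>m$ then, for the balanced split $s=t=\lceil\sqrt n\rceil$ (so $s+t-1=m$), the graph contains an $(s,t)$-bipartite-hole. A union bound over the at most $\binom{n}{\lceil\sqrt n\rceil}^2\le n^{2\lceil\sqrt n\rceil}$ such pairs, each empty between with probability $(1-p)^{st}$ where $st=\lceil\sqrt n\rceil^2\ge n$, gives
\[
  \mathbb{P}(B_1)\le n^{2\lceil\sqrt n\rceil}(1-p)^{n}=(1-p)^n\exp\big((2+o(1))\sqrt n\,\log n\big),
\]
which is within the target since $r\ge 1$. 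For $\mathbb{P}(B_2)$, writing $D=rm+3(r-1)=(2+o(1))r\sqrt n$, I would union bound over the $n$ vertices and estimate the binomial lower tail $\mathbb{P}(d(v)<D)$. When $D<np/2$ the sum $\sum_{k<D}\binom{n-1}{k}p^k(1-p)^{n-1-k}$ is dominated by its top term at $k=D-1$, and $\binom{n-1}{D-1}\le(e(n-1)/(D-1))^{D-1}$ together with $p\le 1-\epsilon$ yields $\log\mathbb{P}(d(v)<D)\le n\log(1-p)+(1+o(1))r\sqrt n\,\log n$, the leading factor coming from $\log(np/D)\le(\tfrac12+o(1))\log n$; while if $D\ge np/2$ then $p=O(r/\sqrt n)$ makes $n\log(1-p)=-O(r\sqrt n)$, so the target right-hand side is already positive and the bound is trivial. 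Either way $\mathbb{P}(B_2)$ is controlled, and combining with $\mathbb{P}(B_1)$ (absorbing the constant from $\mathbb{P}(B_1)+\mathbb{P}(B_2)\le 2\max$) finishes the upper bound.

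The main obstacle is pinning down the constant $(2+o(1))$ rather than merely an $O$-bound. This is what forces the choice $m\approx 2\sqrt n$: it is exactly the balance point at which the count $n^{2\sqrt n}$ of balanced holes and the emptiness probability $(1-p)^n$ combine to the clean factor $(1-p)^n n^{2\sqrt n}$, and it keeps the degree threshold $D=(2+o(1))r\sqrt n$ small enough that the binomial tail contributes only $(1+o(1))r\sqrt n\log n$. Consequently the delicate work is the degree-tail estimate, which must be carried out with its correct leading term (a crude Chernoff bound would lose the constant), and the case split $D<np/2$ versus $D\ge np/2$, which is precisely what lets the single statement cover all $p\le 1-\epsilon$, including the small-$p$ range where $\mathbb{P}(A_r^c)$ is close to $1$ and the inequality holds for trivial reasons.
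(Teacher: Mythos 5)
Your proposal is correct and follows essentially the same route as the paper: the lower bound comes from an isolated vertex, and the upper bound from applying Theorem~\ref{thm:disjointcycles} with parameter $r-1$ to get $A_r^c \subseteq \{\widetilde{\alpha}(G) > 2\lceil\sqrt{n}\rceil\} \cup \{\delta(G) < (2+o(1))r\sqrt{n}\}$, followed by union bounds over balanced $(\lceil\sqrt n\rceil,\lceil\sqrt n\rceil)$-holes and over low-degree vertices. The only divergence is in the degree tail, where the paper sidesteps your case split on $D$ versus $np/2$ by bounding $\mathbb{P}(d(v)<d)$ by the expected number of $(d-1)$-subsets of the remaining vertices containing all neighbours of $v$, namely $\binom{n-1}{d-1}(1-p)^{n-d}\le n^{d-1}(1-p)^n\epsilon^{-d}$, which yields the $(2+o(1))r\sqrt{n}\log n$ term directly (your more careful binomial-tail estimate is also valid and in fact gives a slightly better constant for that term).
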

\begin{proof}
  Let $G \sim G(n, p)$, $t = \lceil \sqrt n \rceil$ and
  $d = r(2t) + 3r - 3$.
  From Theorem~\ref{thm:disjointcycles} we have
  $\{\widetilde{\alpha}(G) \le 2t\} \cap \{\delta(G) \ge d\} \subseteq A_r$,
  so
  \[
  \{\delta(G) = 0\} \subseteq A^c_r
  \subseteq \{\widetilde{\alpha}(G) > 2t\} \cup \{\delta(G) < d\}.
  \]
  Clearly $\mathbb{P}(\delta(G) = 0) \ge (1-p)^n = \exp( n\log(1-p))$.
Also, the probability that vertex $n$ has degree at most $d-1$
is at most the expected number of $(d-1)$-subsets of $[n-1]$ such that each other vertex is not adjacent to $n$.  Thus
\begin{align*}
    \mathbb{P}(\delta(G) < d)
    &\le n \binom{n-1}{d-1}  (1-p)^{(n-1)-(d-1)}
    \le n^d(1-p)^n \epsilon^{-d}\\
    &= \exp(n\log(1-p) + d(\log n+ \log(1/\epsilon))).
\end{align*}
Further
\begin{align*}
  \mathbb{P}(\widetilde{\alpha}(G) > 2t)
    &\le \mathbb{P}(G \mbox{ has a } (t,t)\mbox{-bipartite-hole})\\
    &\le {n \choose t}^2(1-p)^{t^2}
    \le \left( \frac{en}{t}\right)^{2t}(1-p)^{t^2}
    \le e^{2t}n^t(1-p)^n\\
    &= \exp(n\log (1-p) + \sqrt n (\log n + O(1)));
  \end{align*}
and the required upper bound on $\log \mathbb{P}(A^c_r)$ follows
since $d = (2+ o(1)) \, r\sqrt{n}$.
\end{proof}

\noindent
Theorem~\ref{thm:random_disjoint} and
Corollary~\ref{thm:random_cycle} follow directly from Lemma~\ref{thm:random_elaborate}.


\section{Complexity of computing and approximating $\widetilde\alpha(G)$}
\label{sec.hard}

Computing $\widetilde\alpha(G)$ is closely related to the following problem:

\smallskip
\noindent
\textbf{Maximum Balanced Complete Bipartite Subgraph (BCBS):}\\
\emph{Instance}: A positive integer $k$ and a bipartite graph $G$
with parts $A$ and $B$ where $|A|=|B|$; \\
\emph{Question}: Does $G$ contain a complete bipartite graph with $k$ vertices
in each part; that is, does $G$ have a subgraph $K_{k,k}$?
\smallskip

We use lemma 2.2 of \cite{alon1994}.
By that result the BCBS problem is NP-complete.
Also, BCBS is problem [GT24] in~\cite{michael_johnson}.

\smallskip
\noindent
\textbf{Bipartite Hole-Number (BHN):}\\
\emph{Instance}: A positive integer $k$ and a graph $G$; \\
\emph{Question}: Is $\widetilde{\alpha}(G) \geq k$?
\smallskip

To compare the two problems we introduce the following lemma:
\begin{lemma}
  \label{BCBS_to_BHN}
  Given a graph $G$ and an integer $k$,
  let $G'$ be formed from $G$ by adding a disjoint copy of $K_{k-1,2k}$;
  and let $G^\phi_k$ be the complement of $G'$.
  Then $K_{k, k} \subseteq G$ if and only if
  $\widetilde\alpha(G^\phi_k) \ge 2k$.
\end{lemma}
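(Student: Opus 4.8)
The plan is to prove the biconditional by translating the bipartite-hole condition on the complement $G^\phi_k$ into a statement about complete bipartite subgraphs in $G'$, and then checking how the auxiliary copy of $K_{k-1,2k}$ interacts with this. First I would record the key dictionary fact: a pair of disjoint vertex sets $(S,T)$ with $E(S,T)=\emptyset$ in a complement graph $\overline{H}$ corresponds exactly to a pair of disjoint sets with \emph{all} edges between them present in $H$, i.e.\ to a complete bipartite subgraph of $H$ with parts $S$ and $T$. So an $(s,t)$-bipartite-hole in $G^\phi_k = \overline{G'}$ is the same thing as a copy of $K_{s,t}$ (as a bipartite subgraph with designated sides of sizes $s$ and $t$) inside $G'$. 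Consequently $\widetilde\alpha(G^\phi_k)\ge 2k$ means precisely that $G'$ contains a $K_{s,t}$ for \emph{every} split $s+t = 2k$ with $s,t\ge 1$ --- in particular the balanced split $s=t=k$, which is a $K_{k,k}\subseteq G'$.

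Next I would prove the two directions. For the forward direction, if $K_{k,k}\subseteq G$ then certainly $K_{k,k}\subseteq G'$, and I need to exhibit a $K_{s,t}$ in $G'$ for every split $s+t=2k$. The idea is that the added disjoint copy $K_{k-1,2k}$ supplies complete bipartite subgraphs for all the ``unbalanced'' splits: its two parts have sizes $k-1$ and $2k$, so it contains $K_{s,t}$ whenever $\min(s,t)\le k-1$ and the other side fits within $2k$, which covers every split $s+t=2k$ except the balanced one $s=t=k$. The balanced split is then handled by the $K_{k,k}$ coming from $G$ itself. Assembling these, $G'$ contains $K_{s,t}$ for all $s+t=2k$, so by the dictionary $\widetilde\alpha(G^\phi_k)\ge 2k$.

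For the reverse direction, I would assume $\widetilde\alpha(G^\phi_k)\ge 2k$ and extract $K_{k,k}\subseteq G$. By the dictionary, $G'$ contains some $K_{k,k}$, and the task is to argue this balanced copy must in fact lie inside the $G$-part rather than being ``spoiled'' by the auxiliary gadget. The crucial point is that $G'$ is the \emph{disjoint} union of $G$ and $K_{k-1,2k}$, so any complete bipartite subgraph of $G'$ must live entirely within one of the two components. The gadget component $K_{k-1,2k}$ cannot contain a $K_{k,k}$, because a $K_{k,k}$ needs two disjoint sets of size $k$ with all edges between them, yet one side of $K_{k-1,2k}$ has only $k-1$ vertices and within a complete bipartite graph any $K_{k,k}$ must use at least $k$ vertices from each of the two sides. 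Hence the $K_{k,k}$ sits inside $G$, as required.

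The main obstacle --- really the only place needing care --- is the bookkeeping in the forward direction: verifying that the single gadget $K_{k-1,2k}$ genuinely covers \emph{all} unbalanced splits $s+t=2k$ simultaneously, i.e.\ that for each such split with $\min(s,t)\le k-1$ one can embed $K_{s,t}$ with the smaller side inside the part of size $k-1$ and the larger side (of size at most $2k-1 < 2k$) inside the part of size $2k$. I would check the boundary cases $s=1,t=2k-1$ and $s=k-1,t=k+1$ explicitly to confirm the size constraints hold, and confirm that the lone remaining balanced case is exactly the one supplied by $K_{k,k}\subseteq G$. Everything else is the routine complement/disjoint-union reasoning above.
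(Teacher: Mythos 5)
Your proposal is correct and is essentially the paper's argument, just read on the other side of the complement: where you say the gadget $K_{k-1,2k}$ supplies $K_{s,t}\subseteq G'$ for every unbalanced split $s+t=2k$ and the balanced $K_{k,k}$ must lie in the component $G$, the paper equivalently observes that $G^\phi_k$ contains an induced $K_{k-1}\cup K_{2k}$ giving the $(s,2k-s)$-holes for $s\le k-1$, so that $\widetilde\alpha(G^\phi_k)\ge 2k$ reduces to the existence of a $(k,k)$-hole, which must come from $\overline{G}$. The translation between bipartite holes in $G^\phi_k$ and complete bipartite subgraphs of $G'$ is exactly your ``dictionary,'' so the two proofs coincide.
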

\begin{proof}
  We see that $G^\phi_k$ has an induced copy of $K_{k-1} \cup K_{2k}$,
  and so it has an $(s,2k-s)$-bipartite-hole for each $s=1,\ldots,k-1$.
  Thus $\widetilde{\alpha}(G^\phi_k) \geq 2k$ if and only if $G^\phi_k$
  has a $(k,k)$-bipartite-hole;
  and that happens if and only if
  the complement of $G$ has a $(k,k)$-bipartite-hole,
  if and only if $G$ has a subgraph $K_{k,k}$.
\end{proof}

The next proposition follows as a corollary.
\begin{prop}
  The BHN problem is NP-complete.
\end{prop}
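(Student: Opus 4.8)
The plan is to verify the two standard conditions for NP-completeness: membership in NP, and NP-hardness. Membership will follow from the polynomial-size certificates already introduced, and hardness will follow almost immediately from Lemma~\ref{BCBS_to_BHN} together with the NP-completeness of BCBS quoted above.

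First I would argue that BHN belongs to NP. Recall that $\widetilde{\alpha}(G) \ge k$ holds precisely when $G$ contains an $(s, k-s)$-bipartite-hole for each $s = 1, \ldots, \lfloor k/2 \rfloor$: the case $s=0$ is trivial, and the values $s > \lfloor k/2\rfloor$ follow by symmetry. Thus a certificate consists of the $\lfloor k/2 \rfloor$ pairs $(S_i, T_i)$ of disjoint vertex sets with $|S_i| = i$, $|T_i| = k - i$ and $E(S_i, T_i) = \emptyset$, exactly as defined earlier. This certificate has size polynomial in that of the input, and checking that each pair is disjoint, has the correct cardinalities, and spans no edge takes polynomial time; hence BHN lies in NP.

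Next I would establish NP-hardness by a polynomial-time reduction from BCBS. Given a BCBS instance consisting of a bipartite graph $G$ with equal-sized parts and a positive integer $k$, I map it to the BHN instance $(G^\phi_k, 2k)$, where $G^\phi_k$ is the graph constructed in Lemma~\ref{BCBS_to_BHN}. Forming $G'$ by adjoining a disjoint copy of $K_{k-1,2k}$ and then taking the complement can each be carried out in time polynomial in the size of $G$, so the reduction is polynomial. By Lemma~\ref{BCBS_to_BHN} we have $K_{k,k} \subseteq G$ if and only if $\widetilde{\alpha}(G^\phi_k) \ge 2k$, so the map sends yes-instances to yes-instances and no-instances to no-instances. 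Since BCBS is NP-complete, BHN is NP-hard; combining this with membership in NP completes the proof.

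Essentially all the substantive content is already packaged in Lemma~\ref{BCBS_to_BHN}, so no genuine obstacle remains: the only points requiring a little care are confirming that the certificate verification runs in polynomial time and that the complementation step does not blow up the instance size, and both of these are routine.
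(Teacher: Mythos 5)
Your proof is correct and follows exactly the route the paper intends: the paper states that the proposition ``follows as a corollary'' of Lemma~\ref{BCBS_to_BHN} and the NP-completeness of BCBS, and you have simply filled in the routine details (NP membership via the polynomial-size certificate of bipartite holes, and hardness via the polynomial-time reduction $G \mapsto (G^\phi_k, 2k)$). No substantive difference from the paper's argument.
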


In fact, a stronger statement could be given:
the BHN problem is hard to approximate.
To this end we use a result from \cite{hardness_of_approximation}
stating that the BCBS problem cannot be approximated within a factor
of $2^{(\log n)^{\delta}}$ for some $\delta > 0$,
unless 3-SAT can be solved in $O\left(2^{n^{3/4 + \epsilon}}\right)$ time
for every $\epsilon > 0$.
The widely believed Exponential Time Hypothesis (ETH)
states that 3-SAT cannot be solved in $2^{o(n)}$ time,
which provides strong evidence for the inapproximability of BCBS.
Lemma~\ref{BCBS_to_BHN} allows us to directly translate these results
to hardness of approximating $\widetilde{\alpha}(G)$:

\begin{prop}
  There exists $\delta > 0$ such that
  $\widetilde{\alpha}(G)$ cannot be approximated within a factor
  of $2^{(\log n)^{\delta}}$
  provided that 3-SAT $\notin DTIME\left(2^{n^{3/4 + \epsilon}}\right)$
  for some $\epsilon > 0$.
\end{prop}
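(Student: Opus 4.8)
The plan is to read Lemma~\ref{BCBS_to_BHN} as a reduction between the optimisation versions of the two problems and push the inapproximability of BCBS through it. Write $\mathrm{bc}(G)$ for the largest $k$ with $K_{k,k}\subseteq G$. The cited result of \cite{hardness_of_approximation} is most usefully phrased as a promise (gap) problem: under the stated assumption on 3-SAT, for some $\delta>0$ no polynomial-time algorithm can distinguish balanced bipartite instances $G$ on $n$ vertices with $\mathrm{bc}(G)\ge C$ from those with $\mathrm{bc}(G)\le C\cdot 2^{-(\log n)^{\delta}}$. First I would fix such a family of gap instances and feed each $G$ through the map $G\mapsto G^\phi_k$ of the lemma, with the aim of showing that the YES side and the NO side are reflected in well-separated values of $\widetilde{\alpha}(G^\phi_k)$, so that an approximation algorithm for $\widetilde{\alpha}$ would separate the two sides and hence approximate $\mathrm{bc}$.

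Next I would deal with the size bookkeeping, which is routine. Since $G^\phi_k$ has $n+3k-1$ vertices and $k\le n$, its order $n'$ is polynomial in $n$, so $\log n'=\Theta(\log n)$ and a hardness factor of the form $2^{(\log n)^{\delta}}$ transfers to a factor $2^{(\log n')^{\delta'}}$ for a possibly smaller but still positive $\delta'$; this is the only place the exponent changes, and it is exactly the flexibility built into the ``there exists $\delta>0$'' in the statement. The substantive content is then the value of $\widetilde{\alpha}(G^\phi_k)$: applying the lemma with $k=C$, the YES case $K_{C,C}\subseteq G$ gives $\widetilde{\alpha}(G^\phi_C)\ge 2C$, while the NO case, in which $G$ has no $K_{C,C}$, gives $\widetilde{\alpha}(G^\phi_C)<2C$.

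The hard part will be turning this threshold statement into a genuine \emph{multiplicative} separation. The padding $K_{k-1,2k}$ complements to the induced $K_{k-1}\cup K_{2k}$, which already supplies every $(s,2k-s)$-bipartite-hole with $s\neq k$; these holes survive in $G^\phi_k$ because the padding sits as an induced subgraph, so in fact $\widetilde{\alpha}(G^\phi_k)\ge 2k-1$ for \emph{every} $G$. Consequently the lemma on its own only certifies the additive unit gap $2C$ versus $2C-1$, whose ratio tends to $1$, and a bounded-factor approximation of $\widetilde{\alpha}$ cannot resolve it. To recover a factor of $2^{(\log n)^{\delta}}$ I would have to make the floor itself track the biclique number, not a fixed scale: exploiting that the NO instances contain no balanced biclique even of size $C\cdot 2^{-(\log n)^{\delta}}$, I would arrange the unbalanced holes to be available only up to the scale actually present in $G$, so that $\widetilde{\alpha}$ of the output follows $2\,\mathrm{bc}(G)$ on both sides rather than being pinned near $2C$. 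Concretely this means either running the reduction across a whole range of padding scales and reading off the largest $k$ for which the scaled instance still looks like a YES instance, or opening up the structured hard instances of \cite{hardness_of_approximation} to check that their balanced and unbalanced bicliques appear together (so that the padding-free reduction $G\mapsto\overline{G}$ already satisfies $\widetilde{\alpha}(\overline{G})=2\,\mathrm{bc}(G)+O(1)$). Verifying that one of these modifications preserves the exact correspondence of Lemma~\ref{BCBS_to_BHN} is the crux; once the multiplicative gap is in place, the claimed inapproximability of $\widetilde{\alpha}$ follows at once from that of BCBS.
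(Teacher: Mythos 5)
Your diagnosis of the obstacle is correct, and it is worth noting that the paper itself offers nothing beyond the one-line assertion that Lemma~\ref{BCBS_to_BHN} ``directly'' transfers the inapproximability of BCBS. You are right that $\widetilde{\alpha}(G^\phi_k)\ge 2k-1$ for \emph{every} input $G$ (the complemented padding $K_{k-1}\cup K_{2k}$ supplies all unbalanced holes at scale $2k-1$), so a single application of the lemma separates YES from NO instances only as $2k$ versus $2k-1$: an additive gap of $1$ that no multiplicative approximation guarantee can detect. Up to that point your proposal matches the paper's intended route and correctly locates the missing step. The genuine gap is that you stop there: neither of your two suggested repairs is carried out, and the first (sweeping over padding scales $k$ and reading off the largest $k$ that ``looks like YES'') cannot work as stated, since an algorithm promising only $\widetilde{\alpha}/\rho\le A\le\widetilde{\alpha}$ may report a value below $2k$ at every scale on YES and NO instances alike, so no threshold test on its outputs separates them.

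The missing idea is a small change to the padding rather than anything about the internal structure of the hard instances. Let the BCBS gap instance have thresholds $k_1>k_2$ (YES means $\mathrm{bc}(G)\ge k_1$, NO means $\mathrm{bc}(G)<k_2$, with $k_1/k_2\ge 2^{(\log n)^{\delta}}$), and let $H$ be the complement of $G$ plus a disjoint $K_{k_2-1,\,M}$ with $M=k_1+k_2$. Exactly as in the proof of Lemma~\ref{BCBS_to_BHN}, every bipartite hole of $H$ lies entirely in the complemented padding, which yields an $(s,t)$-hole precisely when $\min(s,t)\le k_2-1$ and $\max(s,t)\le M$, or entirely in $\overline{G}$, which yields one precisely when $G$ has a complete bipartite subgraph $K_{s,t}$ respecting its bipartition. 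In a NO instance the split $(k_2,k_2)$ is realised by neither part, so $\widetilde{\alpha}(H)\le 2k_2-1$; in a YES instance every split $(s,t)$ of $r=k_1+k_2$ with $s\le t$ is covered, by the padding when $s\le k_2-1$ and by the $K_{k_1,k_1}$ of $G$ when $k_2\le s\le t\le k_1$, so $\widetilde{\alpha}(H)\ge k_1+k_2$. This gives a multiplicative gap exceeding $k_1/(2k_2)$, and since $v(H)\le 4n$ the lost factor of $2$ and the change of variable from $n$ to $v(H)$ are absorbed into a smaller exponent $\delta'>0$, exactly the routine bookkeeping you describe. With this replacement your outline closes; without it, the argument (yours and, as written, the paper's) is not a complete proof.
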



\section{Concluding remarks}
In this paper we presented a tight sufficient condition for Hamiltonicity,
Theorem~\ref{thm:extremal},
and used that result to prove an extension concerning the existence
of $r$ edge-disjoint Hamilton cycles, Theorem~\ref{thm:disjointcycles}.
As an application of these theorems,
we proved results on disjoint Hamilton cycles in dense random graphs.
It was pointed out to one of us by Michael Krivelevich that results
from~\cite{Krivelevich_hamilton}
should allow us to extend Corollary~\ref{thm:random_cycle}
to much lower edge-probabilities $p(n)$,
down to near the threshold for the necessary minimum degree;
and indeed this is the case as long as
$\frac{np(n)\log\log\log\log n}{\log n \log\log\log n} \to \infty$,
see the appendix in arXiv:math.CO/1604.00888.

We are not aware of any examples where the inequality in
Theorem~\ref{thm:disjointcycles} is sharp for $r \ge 1$.
It would be interesting to find such examples or relax the condition.

\newpage
\section*{Appendix: Note on the probability of containing a Hamilton cycle
  around the threshold}
The \emph{external neighbourhood} of a set $S \subseteq V$
is denoted by $N(S)$, that is
\[
N(S) = \{v \in V \setminus S : v \text{ is adjacent to some } u \in S\}.
\]
We use the following result by Hefetz et al. \cite{Krivelevich_hamilton}.
\begin{thm}
  Suppose $12 \le d \le e^{\sqrt[3]{\log n}}$
  and $m = m(n, d) = \frac{\log n \log \log \log n}{\log d \log \log n}$.\
  If $G = (V, E)$ is a graph such that
  \begin{enumerate}
  \item [P1]
    for each $S \subseteq V$ if $|S| \le \frac{n}{dm}$,
    then $|N(S)| \ge d|S|$, and
  \item [P2]
    for every disjoint $A, B \subseteq V$ if $|A|, |B| \ge \frac{n}{4130m}$,
    then $E(A, B) \neq \emptyset$,
  \end{enumerate}
  then $G$ is Hamiltonian for large $n$.
\end{thm}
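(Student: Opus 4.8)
The plan is to use the P\'osa rotation--extension technique, treating $G$ as an expander in which P1 forces long paths to acquire many endpoints and P2 lets us close them up. First I would reduce to the connected case. If $C$ is a connected component then $N(C)=\emptyset$, so P1 (which would give $|N(C)|\ge d|C|\ge 12$) forces $|C|>\frac{n}{dm}$; applying P1 instead to a subset $S\subseteq C$ of size $\lfloor \frac{n}{dm}\rfloor$ and using $N(S)\subseteq C$ forces $|C|$ to be of order $\frac{n}{m}$, comfortably above $\frac{n}{4130m}$. Hence by P2 there cannot be two disjoint components, and $G$ is connected. It then suffices to prove that a longest path $P$ closes into a cycle spanning $V(P)$ and that $V(P)=V$: for if a cycle $C$ had $V(C)=V(P)\subsetneq V$, connectivity would supply an edge from $V(C)$ to an outside vertex, and cutting $C$ there would give a path longer than $P$, a contradiction.

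The core is the analysis of a longest path $P=(v_1,\dots,v_\ell)$. Keeping $v_1$ fixed and performing all possible rotations of the other end, let $R$ be the resulting set of right endpoints. P\'osa's lemma says that every external neighbour of $R$ is an immediate predecessor or successor along $P$ of a vertex of $R$, so $|N(R)|\le 2|R|$. If we had $|R|\le \frac{n}{dm}$, then P1 would give $|N(R)|\ge d|R|\ge 12|R|$, contradicting $|N(R)|\le 2|R|$; hence $|R|>\frac{n}{dm}$. Repeating the argument with $v_1$ replaced by each right endpoint produces, for each such endpoint, an equally large set of compatible left endpoints, so the rotations generate a large family of pairs $(x,y)$ each joined by a longest path on $V(P)$. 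If any such pair were an edge of $G$ the path would close into a cycle on $V(P)$; so in a counterexample no endpoint pair is an edge.

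Finally I would invoke P2 to derive the contradiction. The endpoint pairs produced by the rotations must be organised into two disjoint vertex sets $A$ and $B$, each of size at least $\frac{n}{4130m}$, such that every $a\in A,\ b\in B$ is an endpoint pair and hence a non-edge; then $E(A,B)=\emptyset$ contradicts P2 and forces the path to close. I expect this last step to be the main obstacle. P\'osa rotations give a ``grid-like'' but not automatically rectangular family of endpoint pairs, and turning the bound $|R|>\frac{n}{dm}$ into two sets that simultaneously meet the $\frac{n}{4130m}$ threshold of P2 --- especially when $d$ is large, so that $\frac{n}{dm}$ lies below that threshold --- is exactly where the specific constants and the choice $m=\frac{\log n\log\log\log n}{\log d\log\log n}$ are needed. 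The expansion P1 would be used a second time here, bootstrapping the endpoint set through iterated neighbourhoods up to the P2 threshold before closing, which is why the conclusion is only asserted for large $n$.
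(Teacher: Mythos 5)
The paper does not prove this theorem at all: it is quoted verbatim from Hefetz, Krivelevich and Szab\'o \cite{Krivelevich_hamilton} and used as a black box, so the comparison here is with their argument rather than with anything in this paper. Your opening moves are sound and match the standard approach: the connectivity reduction via P1 and P2 works (a component $C$ of size at most $\frac{n}{dm}$ would violate P1 since $N(C)=\emptyset$, and a subset of size $\lfloor\frac{n}{dm}\rfloor$ then forces $|C|\gtrsim\frac{n}{m}$, so two components would violate P2), and one round of P\'osa rotations with $|N(R)|\le 2|R|$ against P1 correctly yields $|R|>\frac{n}{dm}$ for the set of rotation endpoints of a longest path.

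The genuine gap is exactly the step you flag as ``the main obstacle'' and then do not carry out, and it is the entire content of the theorem. First, you cannot ``use P1 a second time'' on $R$: P1 only applies to sets of size at most $\frac{n}{dm}$, and once $|R|>\frac{n}{dm}$ it says nothing more; moreover, neighbours of endpoints are not themselves endpoints, so ``bootstrapping through iterated neighbourhoods'' does not enlarge the set of vertices at which the path can terminate. Growing the endpoint set from $\frac{n}{dm}$ (which, for $d$ as large as $e^{\sqrt[3]{\log n}}$, is far below the P2 threshold $\frac{n}{4130m}$) requires roughly $\frac{\log n}{\log d}$ further \emph{rounds of rotations}, each losing constant factors, and controlling these losses is precisely why $m$ carries the correction factor $\frac{\log\log\log n}{\log\log n}$; this multi-round rotation lemma is the technical core of \cite{Krivelevich_hamilton} and is absent from your sketch. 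Second, even granted large endpoint sets, the pairs $(a,b)$ joined by spanning paths of $V(P)$ form, as you note, a non-rectangular family: for each right endpoint $b$ you get a set $A_b$ of compatible left endpoints, but P2 needs a full rectangle $A\times B$ of non-edges, and extracting one from the family $\{A_b\}_{b\in B}$ is an additional argument you do not supply. As written, the proposal is an accurate roadmap of the known proof with its two hardest steps left as declared intentions, so it does not constitute a proof.
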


\begin{lemma}
  If $p \ge 16520 \frac{m \log (e4130 m)}{n}$,
  then $G(n, p)$ fails to satisfy P2 with probability
  less than $e^{-2np}$ for large $n$.
\end{lemma}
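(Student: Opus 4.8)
The plan is to show that failing P2 is equivalent to containing a balanced bipartite hole of one fixed size, to bound the probability of that by a first-moment (union bound) argument, and then to verify the resulting estimate for \emph{every} $p$ above the threshold $p_0 := 16520\,\frac{m\log(e4130m)}{n}$ by reducing to the single worst case $p=p_0$. First I would set $a = \lceil n/(4130m)\rceil$ and observe that $G(n,p)$ fails P2 if and only if it contains an $(a,a)$-bipartite-hole: any hole with both parts of size at least $n/(4130m)$ contains one with parts of size exactly $a$, and conversely such a hole witnesses a failure. A union bound over the at most $\binom{n}{a}^2$ choices of two disjoint $a$-sets then gives
\[
  \mathbb{P}(G(n,p)\text{ fails P2}) \le \binom{n}{a}^2 (1-p)^{a^2}.
\]

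Next I would reduce the whole range $p\ge p_0$ to the endpoint. Since $1-p\le e^{-p}$ we have $(1-p)e^{p}\le 1$, so
\[
  \binom{n}{a}^2(1-p)^{a^2}e^{2np}
  = \binom{n}{a}^2(1-p)^{a^2-2n}\big((1-p)e^{p}\big)^{2n}
  \le \binom{n}{a}^2(1-p)^{a^2-2n}.
\]
For large $n$ we have $a\gg\sqrt n$ (because $m$ is at most polylogarithmic in $n$, so $m=o(\sqrt n)$), hence $a^2-2n>0$ and the right-hand side is decreasing in $p$; it is therefore maximised at $p=p_0$. Thus it suffices to prove the single inequality $\binom{n}{a}^2(1-p_0)^{a^2-2n}<1$, equivalently
\[
  2\log\binom{n}{a} + (a^2-2n)\log(1-p_0) < 0.
\]

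Finally I would estimate the left-hand side using $\log\binom{n}{a}\le a\log(en/a)\le a\log(e4130m)$ and $\log(1-p_0)\le -p_0$, together with the defining value of $p_0$. Since $a\ge n/(4130m)$, the tuned constant gives $a\,p_0\ge 4\log(e4130m)$; and since $a^2-2n\ge \tfrac34 a^2$ for large $n$,
\[
  (a^2-2n)\,p_0 \ge \tfrac34 a\,(a\,p_0) \ge 3a\log(e4130m) > 2a\log(en/a),
\]
which delivers the required strict negativity for large $n$.

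I expect the main obstacle to be not the counting but the handling of the entire range $p\ge p_0$ at once, rather than a single value of $p$. The device that makes this work is the factorisation in the second paragraph, peeling off $e^{-2np}$ via $(1-p)e^{p}\le 1$ so that the remaining quantity $(1-p)^{a^2-2n}$ is monotone and hence extremal at $p_0$; this is legitimate precisely because $a^2\gg n$. The constant $16520=4\cdot 4130$ is then tuned exactly so that the term $a\,p_0\ge 4\log(e4130m)$ beats the entropy contribution $2a\log(en/a)$ with enough surplus to also absorb the $2np$ appearing in the target exponent.
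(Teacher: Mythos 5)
Your proof is correct and follows essentially the same route as the paper: a union bound over pairs of disjoint sets of size exactly $\lceil n/(4130m)\rceil$, with the constant $16520 = 4\cdot 4130$ supplying the factor needed to beat the entropy term $2a\log(en/a)$ and the surplus $a^2 \gg n$ absorbing the $2np$ in the exponent. Your intermediate reduction to the endpoint $p=p_0$ via $(1-p)e^p\le 1$ is harmless but unnecessary, since the paper's two inequalities ($2k\log(en/k)\le\tfrac12 pk^2$ and $\tfrac12 pk^2\ge 2pn$) already hold uniformly for all $p\ge p_0$.
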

\begin{proof}
  It is sufficient to show that the claim holds for each
  $|A|$ and $|B|$ of size $k = \lceil \frac{n}{4130m} \rceil$.
  The number of choices for $A, B$ is at most
  \[ {n \choose k}^2
  \le \left(\frac{en}{k}\right)^{2k}
  =\exp \left(2k \log \frac{en}{k}\right).
  \]
  The probability that a fixed pair $(A, B)$ is edgeless is
  $(1-p)^{k^2} \le \exp(-pk^2)$.
  Now use the union bound to deduce that
  the log of the probability that P2 fails is at most
  \[
  2k \log \frac{en}{k} - pk^2 \le -\frac12pk^2 \le -2pn,
  \]
  where the first inequality holds since
  $p \ge \frac{4 \log(e4130m)}{\frac{n}{4130m}}$
  $\ge \frac{4 \log \frac{en}{k}}{k}$,
  and the second holds for large $n$.
\end{proof}

\begin{lemma}
  If $p \ge (8d + 12) \frac{\log n}{n}$,
  then $G(n, p)$ fails to satisfy P1 for some $|S| \ge 2$
  with probability at most $e^{-\frac{3}{2}pn}$
  for large $n$.
\end{lemma}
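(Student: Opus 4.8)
The plan is to prove this by a first-moment (union bound) argument over all potential ``witnesses'' to the failure of P1. Observe that P1 fails for some $S$ with $|S|\ge 2$ precisely when there is a set $S$ with $2\le|S|=k\le\frac{n}{dm}$ and $|N(S)|<dk$. Such an $S$ admits a witness: a set $T$ disjoint from $S$, of size $b:=\lceil dk\rceil$, with $N(S)\subseteq T$; equivalently, there are no edges from $S$ to $V\setminus(S\cup T)$. First I would fix $k$ and bound the probability that some such pair $(S,T)$ exists. There are at most $\binom{n}{k}\binom{n-k}{b}$ choices, and for each fixed pair the probability of having no $S$-to-$(V\setminus(S\cup T))$ edge is exactly $(1-p)^{k(n-k-b)}$, since each of the $n-k-b$ outside vertices independently avoids all $k$ vertices of $S$ with probability $(1-p)^k$. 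Summing over $k$ then bounds the failure probability by
\[
\sum_{k=2}^{\lfloor n/(dm)\rfloor}\binom{n}{k}\binom{n-k}{b}(1-p)^{k(n-k-b)}.
\]

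Next I would estimate the summand $f(k)$. Using $\binom{n}{k}\le(en/k)^k$, $\binom{n-k}{b}\le(en/b)^b$ and $1-p\le e^{-p}$, I obtain $\log f(k)\le k\log\frac{en}{k}+b\log\frac{en}{b}-pk(n-k-b)$. Two simplifications are central. First, since $\log\frac{en}{x}\le 1+\log n$ for every $x\ge1$, the entropy terms are at most $\big((d+1)k+1\big)(1+\log n)=(d+1)k\log n\,(1+o(1))$; the point is to keep the coefficient $(d+1)$ rather than wastefully inflating $1+\log n$ to $2\log n$. Second, because $k+b\le(d+1)k+1\le\frac{2n}{m}+1=o(n)$ (here I use $m\to\infty$, which follows from $d\le e^{\sqrt[3]{\log n}}$), we have $n-k-b=n(1-o(1))$, so the negative term is $pkn(1-o(1))$.

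Combining these gives $\log f(k)\le(d+1)k\log n\,(1+o(1))-pkn(1-o(1))$. To match the target I would split $pkn=\tfrac32pn+pn\big(k-\tfrac32\big)$ and use $pn\ge(8d+12)\log n$ together with $k\ge2$: the ``excess'' term $pn\big(k-\tfrac32\big)$ then dominates the entropy for every $k\ge2$, since at the critical value $k=2$ one compares $(8d+12)\cdot\tfrac12=(4d+6)$ against the entropy coefficient $2(d+1)=2d+2$, leaving a gap of $2d+4$ that comfortably absorbs the $o(1)$ corrections for large $n$. This yields $\log f(k)\le-\tfrac32pn-pn\,c(k)$ with $c(k)$ positive and increasing in $k$, so the whole sum is $e^{-\frac32 pn}$ times a convergent geometric series whose value tends to $0$ (using $pn\ge108\log n\to\infty$, as $d\ge12$); for large $n$ this factor is at most $1$, giving the claimed bound $e^{-\frac32pn}$.

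The main obstacle is the tight constant bookkeeping at the extremal term $k=2$, where the witness set $T$ has size about $2d$ and contributes roughly $2d\log n$ to the entropy while the exponential gain is only about $2pn$. Everything hinges on the slack $pn\ge(8d+12)\log n$ being large enough to cover both the $\tfrac32pn$ we must extract and this entropy, with room left for the $o(1)$ losses; the larger $k$ terms are easier, since the negative term grows linearly in $k$ while the required $\tfrac32pn$ is fixed.
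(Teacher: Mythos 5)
Your proposal is correct and follows essentially the same route as the paper: a union bound over the set $S$ together with a witness set $T \supseteq N(S)$ of size about $dk$, the bound $\binom{n}{k}\binom{n-k}{\lceil dk\rceil}(1-p)^{k(n-k-\lceil dk\rceil)}$ with $n-k-\lceil dk\rceil = n(1-o(1))$, and the observation that the critical case $k=2$ is exactly where the hypothesis $pn \ge (8d+12)\log n$ is needed (the paper makes the same point via the monotonicity of $\frac{(d+1)s+1}{\frac78 s-\frac32}$ in $s$). The only differences are cosmetic: the paper uses the explicit factor $\frac78$ where you use $1-o(1)$, and it sums over $s$ by bounding each term by $\frac1n e^{-\frac32 pn}$ rather than by a decaying series.
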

\begin{proof}
  The probability that a set $S$ of size $s$, $2 \le s \le \frac{n}{dm}$,
  is not expanding is at most
  \[
    p_0 = {n \choose s}{n - s \choose \lfloor ds \rfloor}(1-p)^{s(n-s-ds)}
    \le \exp\left((d+1)s \log n \right) \exp \left(-\frac 78 nsp \right),
  \]
  since $n - s - ds \ge \frac78n$ for large $n$.
  Note that $\frac{ax+b}{cx+d}$ is increasing in $x$ on $(-\frac dc, \infty)$
  iff $ad - cb > 0$ and decreasing on $(-\frac dc, \infty)$ otherwise,
  hence $\frac{(d+1)s + 1}{\frac{7}{8}s - \frac{3}{2}}$
  is decreasing in $s$ on $(\frac{12}7, \infty)$,
  and therefore $\frac{(d+1)s + 1}{\frac{7}{8}s - \frac{3}{2}} \le 8d + 12$,
  since $s \ge 2$.
  Finally,
  \begin{align*}
    &pn \ge (8d+12) \log n
    \ge \frac{(d+1)s + 1}{\frac{7}{8}s - \frac{3}{2}} \log n \\
    \Rightarrow \hspace{5pt} &\log n + (d+1)s \log n -\frac 78 nsp \le -\frac 32pn \\
    \Rightarrow \hspace{5pt} &n p_0 \le \exp\left(- \frac32pn \right).
  \end{align*}
  Taking the union bound over all $s \ge 2$ yields the statement.
\end{proof}

We see that the failure probability is dominated by the probability
of a single low-degree vertex.

\begin{lemma}
  Suppose $\frac{d \log n}{n} << p < \frac12$.
  Then
  \[
  \mathbb{P}(\delta(G(n, p)) \le d) \le \exp(-pn(1+o(1))).
  \]
\end{lemma}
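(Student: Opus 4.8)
The plan is to reduce the statement to the lower tail of a single binomial via a union bound, and then estimate that tail by its largest term. First I would record that $d\ge 1$ throughout (consistent with the surrounding hypotheses $d\ge 12$), so that $\frac{d\log n}{n}\ll p$ gives $\log n\le d\log n\ll pn$, and also $d\ll \frac{np}{\log n}$, whence $d=o(np)$ and $d=o(n)$. Since the degree of a fixed vertex $v$ in $G(n,p)$ is distributed as $\mathrm{Bin}(n-1,p)$ with mean $(n-1)p$, and $d$ lies well below this mean, a union bound over the $n$ vertices gives
\[
\mathbb{P}(\delta(G(n,p))\le d)\le n\,\mathbb{P}(\mathrm{Bin}(n-1,p)\le d),
\]
and it remains to bound the binomial lower tail.

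Next I would observe that for $k\le d$ the ratio of consecutive summands is $\frac{\binom{n-1}{k+1}p^{k+1}(1-p)^{n-2-k}}{\binom{n-1}{k}p^{k}(1-p)^{n-1-k}}=\frac{(n-1-k)p}{(k+1)(1-p)}$, which exceeds $1$ because $np\gg d$ and $d=o(n)$. Hence the terms increase on $\{0,\dots,d\}$ and are dominated by the term at $k=d$, so
\[
\mathbb{P}(\mathrm{Bin}(n-1,p)\le d)\le (d+1)\binom{n-1}{d}p^{d}(1-p)^{n-1-d}.
\]

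Then I would take logarithms of the resulting bound on $\mathbb{P}(\delta\le d)$, using $\binom{n-1}{d}\le(en/d)^{d}$ and $\log(1-p)\le -p$, to obtain
\[
\log\mathbb{P}(\delta\le d)\le \log n+\log(d+1)+d\log\tfrac{enp}{d}-(n-1-d)p.
\]
The decisive term is $-(n-1-d)p=-pn+p(1+d)=-pn(1+o(1))$, since $d=o(n)$. Every remaining term is $o(pn)$: indeed $\log n=o(pn)$ and $\log(d+1)=o(pn)$ because $d\ge 1$ forces $\log n\le d\log n\ll pn$; and, using $p<1\le d$ so that $\log p\le\log d$, we get $d\log\frac{enp}{d}=d(1+\log n+\log p-\log d)\le d(1+\log n)=o(pn)$ again from $d\log n\ll pn$. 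Summing these estimates yields $\log\mathbb{P}(\delta\le d)\le -pn(1+o(1))$, as required.

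I expect the main obstacle to be the bookkeeping in this last step: verifying that each error term is genuinely $o(pn)$ rests entirely on the single hypothesis $d\log n\ll pn$, so one must take care to reduce every contribution (in particular $d\log\frac{enp}{d}$) to a multiple of $d\log n$ rather than to something larger, and to apply $\log(1-p)\le -p$ in the correct direction so that $-pn$ emerges as a genuine upper bound rather than merely an approximation valid only for small $p$.
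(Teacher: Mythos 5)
Your proof is correct and follows essentially the same route as the paper: a union bound over vertices followed by bounding the lower tail of $\mathrm{Bin}(n-1,p)$ by a single term of the form $\binom{n-1}{d}(1-p)^{n-1-d}$ (the paper counts $(d-1)$-subsets of potential neighbours rather than invoking monotonicity of the binomial terms, but this is cosmetic), and then absorbing all polynomial factors into $o(pn)$ via $d\log n \ll pn$. Your version even addresses $\mathbb{P}(\delta \le d)$ rather than the paper's $\mathbb{P}(\delta < d)$, so no changes are needed.
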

\begin{proof}
    \begin{align*}
    \mathbb{P}(\delta(G) < d)
    &\le n {n-1 \choose d-1}(1-p)^{(n-1)-(d-1)} \le n^d e^{-pn} 2^d\\
    &=\exp(-pn + d \log(2n)) = \exp(-pn(1 + o(1))).
    & \hspace{66pt}\qedhere
  \end{align*}
\end{proof}

For $k \ge 1$ the notation $\log^{(k)}n$ stands for $\log n$ if $k=1$
and $\log (\log^{(k-1)}n)$ otherwise.
To wrap things up, note that $\log m = \Theta(\log^{(2)} n)$,
hence $m \log m = \Theta(\frac{\log n \log^{(3)} n}{\log d})$.
By solving the equation $d \log n = m \log m$
we get $d \log d = \Theta(\log^{(3)} n)$
and hence $d = \Theta(\frac{\log^{(3)} n}{\log^{(4)} n})$.
Therefore if $p = \frac{\log n\log^{(3)} n}{n\log^{(4)} n}\omega(1)$,
the probability of $G(n, p)$ failing to contain a Hamilton cycle is
$e^{-np(1+o(1))}$.

\bibliographystyle{alpha}
\bibliography{ham_random}

\end{document}